\documentclass[reqno,a4paper,11pt]{amsart}
\parindent=15pt
\parskip=3pt
\setlength{\textwidth}{7in}
\setlength{\oddsidemargin}{-24pt}
\setlength{\evensidemargin}{-24pt}
\setlength{\textheight}{9.2in}
\setlength{\topmargin}{-5pt}

\usepackage[ngerman, english]{babel}
\usepackage[all,poly]{xy}
\usepackage[mathcal]{eucal}
\usepackage{enumerate}
\usepackage{amsmath,amssymb, amsthm, amsfonts}
\usepackage[pagebackref,colorlinks]{hyperref}
\usepackage[foot]{amsaddr}

\theoremstyle{plain}
\newtheorem {lemma}{Lemma}

\newtheorem {theorem}[lemma]{Theorem}
\newtheorem {corollary}[lemma]{Corollary}

\theoremstyle{definition}
\newtheorem{definition}[lemma]{Definition}

\newtheorem{remark}[lemma]{Remark}
\newtheorem {example}[lemma]{Example}

\title[Sandwich classification revisited]{Sandwich classification for $GL_n(R)$, $O_{2n}(R)$ and $U_{2n}(R,\Lambda)$ revisited}
\author{Raimund Preusser}
\address{Department of Mathematics,
University of Brasilia, Brazil}
\email{raimund.preusser@gmx.de}

\begin{document}
\maketitle
\begin{abstract}
Let $n$ be a natural number greater or equal to $3$, $R$ a commutative ring and $\sigma\in GL_n(R)$. We show that $t_{kl}(\sigma_{ij})$ (resp. $t_{kl}(\sigma_{ii}-\sigma_{jj})$) where $i\neq j$ and $k\neq l$ can be expressed as a product of $8$ (resp. $24$) matrices of the form $^{\epsilon}\sigma^{\pm 1}$ where $\epsilon\in E_n(R)$. We prove similar results for the orthogonal groups $O_{2n}(R)$ and the hyperbolic unitary groups $U_{2n}(R,\Lambda)$ under the assumption that $R$ is commutative and $n\geq 3$. This yields new, very short proofs of the Sandwich Classification Theorems for the groups $GL_n(R)$, $O_{2n}(R)$ and $U_{2n}(R,\Lambda)$.
\end{abstract}
\section{Introduction}
Let $n$ be a natural number greater or equal to $3$ and $R$ a commutative ring. Let $\sigma\in GL_n(R)$ and set $H:={}^{E_n(R)}\sigma$, i.e. $H$ is the smallest subgroup of $GL_n(R)$ which contains $\sigma$ and is normalized by $E_n(R)$. Let $I$ be the ideal of $R$ defined by $I:=\{x\in R\mid t_{12}(x)\in H\}$. Then clearly $E_n(R,I)\subseteq H$. By the Sandwich Classification Theorem (SCT) for $GL_n(R)$ one also has $H\subseteq C_n(R,I)$. It follows that $\sigma_{ij}, \sigma_{ii}-\sigma_{jj}\in I$ for any $i\neq j$, i.e. the matrices $t_{12}(\sigma_{ij})$ and $t_{12}(\sigma_{ii}-\sigma_{jj})$ can be expressed as products of matrices of the form $^{\epsilon}\sigma^{\pm 1}$ where $\epsilon\in E_n(R)$. We show how one can use the theme of the paper \cite{SV} in order to find such expressions and give boundaries for the number of factors, see Theorem \ref{mthm1}. This yields a new, very simple proof of the SCT for $GL_n(R)$.

Further we prove an orthogonal and a unitary version of Theorem \ref{mthm1} (cf. Theorem \ref{mthm2} and Theorem \ref{mthm3}). The proof of the orthogonal version is very simple. The proof of the unitary version is a bit more complicated, but still it is much shorter than the proof of the SCT for the groups $U_{2n}(R,\Lambda)$ given in \cite{preusser} (on the other hand, in \cite{preusser} the ring $R$ is only assumed to be quasi-finite and hence the result is a bit more general). For the hyperbolic unitary groups $U_{2n}(R,\Lambda)$ this yields the first proof of the SCT which does not use localization.

This paper is organized as follows. In Section \ref{sec2} we recall some standard notation
which will be used throughout the paper. In Section \ref{secpre} we state two lemmas which will be used in the proofs of the main theorems \ref{mthm1}, \ref{mthm2} and \ref{mthm3}. In Section \ref{sec3} we recall the definitions of the general linear group $GL_n(R)$ and some important subgroups, in Section \ref{sec4} we prove Theorem \ref{mthm1}. In Section \ref{sec5} we recall the definitions of the (even-dimensional) orthogonal group $O_{2n}(R)$ and some important subgroups, in Section \ref{sec6} we prove Theorem \ref{mthm2}. In Section \ref{sec7} we recall the definitions of A. Bak's hyperbolic unitary group $U_{2n}(R,\Lambda)$ and some important subgroups and in the last section we prove Theorem \ref{mthm3}.

\section{Notation}\label{sec2} 
By a natural number we mean an element of the set $\mathbb{N}:=\{1,2,3,\dots\}$. If $G$ is a group and $g,h\in G$, we let $^hg:=hgh^{-1}$ and $[g,h]:=ghg^{-1}h^{-1}$. By a ring we will always mean an associative ring with $1$ such that $1\neq 0$. Ideal will mean two-sided ideal. If $X$ is a subset of a ring $R$, then we denote by $I(X)$ the ideal of $R$ generated by $X$. If $X=\{x\}$, then we may write $I(x)$ instead of $I(X)$. The set of all invertible elements in a ring $R$ is denoted by $R^*$. If $m$ and $n$ are natural numbers and $R$ is a ring, then the set of all $m\times n$ matrices with entries in $R$ is denoted by $M_{m\times n}(R)$. If $a\in M_{m\times n}(R)$, we denote the transpose of $a$ by $a^t$ and the entry of $a$ at position $(i,j)$ by $a_{ij}$. We denote the $i$-th row of $a$ by $a_{i*}$ and its $j$-th column by $a_{*j}$. We set $M_n(R):=M_{n\times n}(R)$. The identity matrix in $M_n(R)$ is denoted by $e$ or $e^{n\times n}$ and the matrix with a $1$ at position $(i,j)$ and zeros elsewhere is denoted by $e^{ij}$. If $a\in M_n(R)$ is invertible, the entry 
of $a^{-1}$ at position $(i,j)$ is denoted by $a'_{ij}$, the $i$-th row of $a^{-1}$ by $a'_{i*}$ and the $j$-th column of $a^{-1}$ by $a'_{*j}$. Further we denote by $^nR$ the set of all rows $v=(v_1,\dots,v_n)$ with entries in $R$ and by $R^n$ the set of all columns $u=(u_1,\dots,u_n)^t$ with entries in $R$. We consider $^nR$ as left $R$-module and $R^n$ as right $R$-module.

\section{Preliminaries}\label{secpre}
The following two lemmas are easy to check.
\begin{lemma}\label{pre}
Let $G$ be a group and $a,b,c\in G$. Then ${}^{b^{-1}}[a,bc]=[b^{-1},a][a,c]$.
\end{lemma}
\begin{lemma}\label{pre2}
Let $G$ be a group, $E$ a subgroup and $a\in G$. Suppose that $b\in G$ is a product of $n$ elements of the form $^{\epsilon}a^{\pm 1}$ where $\epsilon\in E$. Then 
\begin{enumerate}[(i)]
\item $^{\epsilon'}b$ is a product of $n$ elements of the form $^{\epsilon}a^{\pm 1}$
\item $[\epsilon',b]$ is a product of $2n$ elements of the form $^{\epsilon}a^{\pm 1}$
\end{enumerate}
for any $\epsilon'\in E$.
\end{lemma}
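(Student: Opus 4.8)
The plan is to unwind the hypothesis and apply the two identities valid in any group: conjugation is an automorphism, so $^{h}(gg')={}^{h}g\,{}^{h}g'$, and it commutes with inversion, so $({}^{h}g)^{-1}={}^{h}(g^{-1})$. Write the given factorization as $b={}^{\epsilon_1}a^{\delta_1}\cdots{}^{\epsilon_n}a^{\delta_n}$ with $\epsilon_1,\dots,\epsilon_n\in E$ and $\delta_1,\dots,\delta_n\in\{1,-1\}$.

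For part (i), conjugate this product by $\epsilon'$. Since conjugation by $\epsilon'$ is a homomorphism,
\[
{}^{\epsilon'}b={}^{\epsilon'}\bigl({}^{\epsilon_1}a^{\delta_1}\bigr)\cdots{}^{\epsilon'}\bigl({}^{\epsilon_n}a^{\delta_n}\bigr)={}^{\epsilon'\epsilon_1}a^{\delta_1}\cdots{}^{\epsilon'\epsilon_n}a^{\delta_n}.
\]
Because $E$ is a subgroup, each $\epsilon'\epsilon_i$ again lies in $E$, so the right-hand side is a product of $n$ elements of the form ${}^{\epsilon}a^{\pm1}$, which proves (i).

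For part (ii), note that by the definition of the commutator $[\epsilon',b]=\epsilon'b\epsilon'^{-1}b^{-1}=({}^{\epsilon'}b)\,b^{-1}$. By (i) the first factor ${}^{\epsilon'}b$ is a product of $n$ elements of the required form. Inverting the factorization of $b$ reverses the order of the factors and negates the exponents, giving $b^{-1}={}^{\epsilon_n}a^{-\delta_n}\cdots{}^{\epsilon_1}a^{-\delta_1}$, again a product of $n$ elements of the form ${}^{\epsilon}a^{\pm1}$. Concatenating these two expressions yields $[\epsilon',b]$ as a product of $2n$ such elements.

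As the phrase ``easy to check'' in the text suggests, there is no genuine obstacle here; one only has to be careful that $E$ is closed under multiplication (this is what keeps $\epsilon'\epsilon_i\in E$) and that inverting a length-$n$ product again produces a length-$n$ product in the same generating set. The lemma will be invoked repeatedly in the sequel to bookkeep how the number of factors of the form ${}^{\epsilon}\sigma^{\pm1}$ multiplies under the group-theoretic manipulations appearing in the proofs of Theorems \ref{mthm1}, \ref{mthm2} and \ref{mthm3}.
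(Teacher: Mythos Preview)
Your proof is correct and is precisely the routine verification the paper has in mind; the paper itself omits the argument entirely, merely stating that the lemma is ``easy to check.'' There is nothing to compare.
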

Lemma \ref{pre2} will be used in the proofs of the main theorems without explicit reference.
\section{The general linear group $GL_n(R)$}\label{sec3}
In this section $n$ denotes a natural number, $R$ a ring and $I$ an ideal of $R$. We shall recall the definitions of the general linear group $GL_n(R)$ and the following subgroups of $GL_n(R)$; the elementary subgroup $E_n(R)$, the preelementary subgroup $E_n(I)$ of level $I$, the elementary subgroup $E_n(R,I)$ of level $I$, the principal congruence subgroup $GL_n(R,I)$ of level $I$ and the full congruence subgroup $C_n(R,I)$ of level $I$. 
\subsection{The general linear group}
\begin{definition}
$GL_{n}(R):=(M_n(R))^*$ is called {\it general linear group}.
\end{definition}
\subsection{The elementary subgroup}
\begin{definition}
Let $i,j\in\{1,\dots,n\}$ such that $i\neq j$ and $x\in R$. Then $t_{ij}(x):=e+xe^{ij}$ is called an {\it elementary transvection}. The subgroup of $GL_n(R)$ generated by all elementary transvections is called {\it elementary subgroup} and is denoted by $E_n(R)$. An elementary transvection $t_{ij}(x)$ is called $I$-{\it elementary} if $x\in I$. The subgroup of $GL_n(R)$ generated by all $I$-elementary transvections is called {\it preelementary subgroup of level} $I$ and is denoted by $E_n(I)$. Its normal closure in $E_n(R)$ is called {\it elementary subgroup of level} $I$ and is denoted by $E_n(R,I)$.
\end{definition}
\begin{lemma}\label{3}
The relations
\begin{align*}
t_{ij}(x)t_{ij}(y)&=t_{ij}(x+y), \tag{R1}\\
[t_{ij}(x),t_{hk}(y)]&=e \tag{R2}\text{ and}\\
[t_{ij}(x),t_{jk}(y)]&=t_{ik}(xy) \tag{R3}\\
\end{align*}
hold where $i\neq k, j\neq h$ in $(R2)$ and $i\neq k$ in $(R3)$.
\end{lemma}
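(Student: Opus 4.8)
The statement to prove is Lemma~\ref{3}, which records the three Steinberg-type relations (R1)--(R3) for elementary transvections $t_{ij}(x)=e+xe^{ij}$ in $GL_n(R)$. The plan is to verify each identity by direct matrix multiplication, exploiting the single fundamental rule $e^{ij}e^{hk}=\delta_{jh}e^{ik}$ for the matrix units, where $\delta$ is the Kronecker delta. Since each $t_{ij}(x)$ is a very sparse perturbation of the identity, all products collapse to a handful of terms.

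First I would establish (R1): expanding $t_{ij}(x)t_{ij}(y)=(e+xe^{ij})(e+ye^{ij})=e+xe^{ij}+ye^{ij}+xy\,e^{ij}e^{ij}$, and since $i\neq j$ forces $e^{ij}e^{ij}=\delta_{ji}e^{ij}=0$, this is $e+(x+y)e^{ij}=t_{ij}(x+y)$. As an immediate consequence, $t_{ij}(x)^{-1}=t_{ij}(-x)$, which I would note explicitly since it is needed to form the commutators in (R2) and (R3). Next, for (R2) and (R3) I would first compute the conjugate $t_{ij}(x)t_{hk}(y)t_{ij}(x)^{-1}$, or equivalently expand the commutator $[t_{ij}(x),t_{hk}(y)]=t_{ij}(x)t_{hk}(y)t_{ij}(-x)t_{hk}(-y)$ directly. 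Multiplying out the four factors and discarding every product of matrix units $e^{ab}e^{cd}$ with $b\neq c$, the surviving terms are governed entirely by which of the indices coincide.

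For (R2), the hypotheses $i\neq k$ and $j\neq h$ mean that in the relevant products $e^{ij}e^{hk}=\delta_{jh}e^{ik}=0$ and $e^{hk}e^{ij}=\delta_{ki}e^{hj}=0$; hence all cross terms vanish and the product telescopes to $e+xe^{ij}+ye^{hk}-xe^{ij}-ye^{hk}=e$ (with the degree-two and higher terms also vanishing for the same index reasons). For (R3), where $i\neq k$, $j\neq h$ is not assumed but instead the middle indices are chained as $t_{ij}(x)$ and $t_{jk}(y)$: here $e^{ij}e^{jk}=e^{ik}$ survives while $e^{jk}e^{ij}=\delta_{ki}e^{jj}$, and since $i\neq k$ this is $0$; tracking the one surviving cross term through the expansion yields exactly $e+xy\,e^{ik}=t_{ik}(xy)$, using also $e^{ik}e^{ij}=0$ and $e^{ik}e^{jk}=0$ (as $k\neq i$ and $i\neq j$) to kill the would-be higher-order corrections. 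The bookkeeping of which indices are allowed to coincide is the only place where care is required, but there is no genuine obstacle; it is a routine finite computation. I would present (R1) in one line and (R2), (R3) each in two or three lines of displayed matrix arithmetic, citing only $e^{ab}e^{cd}=\delta_{bc}e^{ad}$ and (R1).

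\begin{proof}
We repeatedly use the rule $e^{ab}e^{cd}=\delta_{bc}e^{ad}$. For (R1),
\[
t_{ij}(x)t_{ij}(y)=(e+xe^{ij})(e+ye^{ij})=e+(x+y)e^{ij}+xy\,e^{ij}e^{ij}=t_{ij}(x+y),
\]
since $e^{ij}e^{ij}=\delta_{ji}e^{ij}=0$ because $i\neq j$. In particular $t_{ij}(x)^{-1}=t_{ij}(-x)$.

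For (R2), assume $i\neq k$ and $j\neq h$. Then $e^{ij}e^{hk}=\delta_{jh}e^{ik}=0$ and $e^{hk}e^{ij}=\delta_{ki}e^{hj}=0$, so $t_{ij}(x)$ and $t_{hk}(y)$ commute:
\[
t_{ij}(x)t_{hk}(y)=(e+xe^{ij})(e+ye^{hk})=e+xe^{ij}+ye^{hk}=(e+ye^{hk})(e+xe^{ij})=t_{hk}(y)t_{ij}(x),
\]
hence $[t_{ij}(x),t_{hk}(y)]=e$.

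For (R3), assume $i\neq k$. Since $i\neq j$ (as $t_{ij}$ is an elementary transvection) and $j\neq k$ (as $t_{jk}$ is one), we have $e^{ij}e^{jk}=e^{ik}$, $e^{jk}e^{ij}=\delta_{ki}e^{jj}=0$, $e^{ik}e^{ij}=\delta_{ki}e^{ij}=0$ and $e^{ik}e^{jk}=\delta_{kj}e^{ik}=0$. Therefore
\[
t_{ij}(x)t_{jk}(y)=(e+xe^{ij})(e+ye^{jk})=e+xe^{ij}+ye^{jk}+xy\,e^{ik}
\]
while
\[
t_{jk}(y)t_{ij}(x)=(e+ye^{jk})(e+xe^{ij})=e+xe^{ij}+ye^{jk}.
\]
Consequently
\begin{align*}
[t_{ij}(x),t_{jk}(y)]&=t_{ij}(x)t_{jk}(y)\,t_{ij}(-x)t_{jk}(-y)\\
&=\bigl(e+xe^{ij}+ye^{jk}+xy\,e^{ik}\bigr)\bigl(e-xe^{ij}-ye^{jk}\bigr)\\
&=e+xy\,e^{ik}=t_{ik}(xy),
\end{align*}
where in the last line all remaining products of matrix units vanish by the identities above together with $e^{ij}e^{ij}=0$ and $e^{jk}e^{jk}=0$.
\end{proof}
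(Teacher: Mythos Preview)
Your approach is exactly what the paper intends: it simply says ``Straightforward computation,'' and you carry that computation out in detail using the rule $e^{ab}e^{cd}=\delta_{bc}e^{ad}$. Relations (R1) and (R2) are handled correctly.

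There is, however, a slip in your treatment of (R3). In the final display you write
\[
t_{ij}(-x)t_{jk}(-y)=e-xe^{ij}-ye^{jk},
\]
but this is not right: by your own first formula with $x\mapsto -x$, $y\mapsto -y$ one has
\[
t_{ij}(-x)t_{jk}(-y)=e-xe^{ij}-ye^{jk}+(-x)(-y)e^{ik}=e-xe^{ij}-ye^{jk}+xy\,e^{ik}.
\]
(Equivalently, the inverse of $t_{jk}(y)t_{ij}(x)=e+xe^{ij}+ye^{jk}$ is $e-xe^{ij}-ye^{jk}+xy\,e^{ik}$, not $e-xe^{ij}-ye^{jk}$, since $(xe^{ij}+ye^{jk})^2=xy\,e^{ik}\neq 0$.) If one actually multiplies your two displayed brackets, the result is $e$, not $e+xy\,e^{ik}$. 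Restoring the missing $+xy\,e^{ik}$ in the second factor, the product
\[
\bigl(e+xe^{ij}+ye^{jk}+xy\,e^{ik}\bigr)\bigl(e-xe^{ij}-ye^{jk}+xy\,e^{ik}\bigr)
\]
does come out to $e+xy\,e^{ik}=t_{ik}(xy)$, so the argument is easily repaired.
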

\begin{proof}
Straightforward computation.
\end{proof}
\begin{definition}\label{4}
Let $i,j\in\{1,\dots,n\}$ such that $i\neq j$. Define $p_{ij}:=e+e^{ij}-e^{ji}-e^{ii}-e^{jj}=t_{ij}(1)t_{ji}(-1)t_{ij}(1)\in E_{n}(R)$. It is easy show that $p_{ij}^{-1}=p_{ji}$. 
\end{definition}
\begin{lemma}\label{5}
Let $x\in R$ and $i,j,k\in\{1,\dots,n\}$ be pairwise distinct indices. Then 
\begin{enumerate}[(i)]
\item $^{p_{ki}}t_{ij}(x)=t_{kj}(x)$ and
\item $^{p_{kj}}t_{ij}(x)=t_{ik}(x)$.
\end{enumerate}
\end{lemma}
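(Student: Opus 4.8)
The plan is to verify both identities by direct matrix computation, since the statement is purely a claim about how the permutation-like matrices $p_{ki}$ conjugate elementary transvections. First I would recall from Definition \ref{4} that $p_{ki}=e+e^{ki}-e^{ik}-e^{ii}-e^{kk}$, so that $p_{ki}$ agrees with the identity outside the $2\times 2$ block on rows/columns $\{i,k\}$, where it acts as $\begin{pmatrix}0&-1\\1&0\end{pmatrix}$ (with the convention that the first coordinate is $i$, the second is $k$). Its inverse is $p_{ik}=e+e^{ik}-e^{ki}-e^{ii}-e^{kk}$. Conjugation by a matrix $g$ sends $e^{ij}$ to $g e^{ij} g^{-1}$, so $^{p_{ki}}t_{ij}(x)=e+x\,(p_{ki}e^{ij}p_{ik})$. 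The core of the proof is thus to compute $p_{ki}e^{ij}p_{ik}$.

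For part (i): left-multiplication by $p_{ki}$ sends the $i$-th standard basis column to the $k$-th (with sign $+1$, since $p_{ki}$ has a $1$ in position $(k,i)$), so $p_{ki}e^{ij}$ has its single nonzero entry moved from row $i$ to row $k$, giving $e^{kj}$ (note $j\neq i,k$ so no sign issues arise from the other block entries). Then right-multiplication by $p_{ik}$ fixes the $j$-th column since $j\neq i,k$, so $p_{ki}e^{ij}p_{ik}=e^{kj}$, and hence $^{p_{ki}}t_{ij}(x)=e+xe^{kj}=t_{kj}(x)$. For part (ii): here $p_{kj}=e+e^{kj}-e^{jk}-e^{jj}-e^{kk}$. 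Left-multiplication by $p_{kj}$ fixes the $i$-th row of $e^{ij}$ since $i\neq j,k$, and right-multiplication by $p_{kj}^{-1}=p_{jk}=e+e^{jk}-e^{kj}-e^{jj}-e^{kk}$ sends the $j$-th column to the $k$-th column: indeed $e^{ij}p_{jk}$ picks out the $(j,k)$-entry of $p_{jk}$, which is $+1$, so $e^{ij}p_{jk}=e^{ik}$. Thus $p_{kj}e^{ij}p_{jk}=e^{ik}$ and $^{p_{kj}}t_{ij}(x)=t_{ik}(x)$.

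I do not expect any real obstacle: the only thing to watch is bookkeeping of signs coming from the $-e^{ii}-e^{kk}$ (resp. $-e^{jj}-e^{kk}$) terms in $p_{ki}$ and $p_{ik}$, but since in each case the relevant row and column indices of the transvection include exactly one of $\{i,k\}$ (resp. $\{j,k\}$) and one index outside the block, these diagonal correction terms never contribute. Alternatively, one can avoid matrix entries entirely and use the relations from Lemma \ref{3}: writing $p_{ki}=t_{ki}(1)t_{ik}(-1)t_{ki}(1)$ and repeatedly applying (R1)--(R3) (together with the commuting relation (R2), using that $n\geq 3$ guarantees enough distinct indices) one conjugates $t_{ij}(x)$ step by step; this is the route I would actually write up if a purely combinatorial argument is preferred, but the matrix computation is shorter and I would present that, simply stating that it is a straightforward verification.
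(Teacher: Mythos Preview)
Your proof is correct. The paper's own proof consists of the single line ``Follows from the relations in Lemma~\ref{3}'', i.e.\ it uses the factorisation $p_{ki}=t_{ki}(1)t_{ik}(-1)t_{ki}(1)$ together with (R1)--(R3), which is exactly the alternative you sketch at the end. Your primary route, the direct matrix computation of $p_{ki}e^{ij}p_{ik}$, is a slightly different but equally valid verification: it avoids unwinding the three-term product and the repeated application of the commutator identities, at the cost of tracking entries by hand. For a lemma this elementary the two approaches are essentially interchangeable; either would be acceptable in the paper.
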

\begin{proof}
Follows from the relations in Lemma \ref{3}.
\end{proof}
\subsection{Congruence subgroups}
\begin{definition}
The kernel of the group homomorphism $GL_n(R)\rightarrow GL_n(R/I)$ induced by the canonical map $R\rightarrow R/I$ is called {\it principal congruence subgroup of level} $I$ and is denoted by $GL_n(R,I)$. Obviously $GL_n(R,I)$ is a normal subgroup of $GL_n(R)$.
\end{definition}
\begin{definition}
The preimage of $Center(GL_n(R/I))$ under the group homomorphism $GL_n(R)\rightarrow GL_n(R/I)$ induced by the canonical map $R\rightarrow R/I$ is called {\it full congruence subgroup of level} $I$ and is denoted by $C_n(R,I)$. Obviously $GL_n(R,I)\subseteq C_n(R,I)$ and $C_n(R,I)$ is a normal subgroup of $GL_n(R)$.
\end{definition}
\begin{theorem}\label{8}
If $n\geq 3$ and $R$ is almost commutative (i.e. module finite over its center), then the equalities
\begin{align*}
[C_n(R,I),E_n(R)]=[E_n(R,I),E_n(R)]=E_n(R,I)
\end{align*}
hold.
\end{theorem}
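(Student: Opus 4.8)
The plan is to establish the three equalities of Theorem~\ref{8} by proving a chain of inclusions
\[
[C_n(R,I),E_n(R)]\subseteq E_n(R,I)\subseteq [E_n(R,I),E_n(R)]\subseteq [C_n(R,I),E_n(R)],
\]
from which all three sets coincide. The last inclusion is trivial since $E_n(R,I)\subseteq C_n(R,I)$ (as $E_n(R,I)\subseteq GL_n(R,I)\subseteq C_n(R,I)$). So the real content lies in the first two inclusions, and the first one is where the work of this paper pays off.

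First I would handle the inclusion $E_n(R,I)\subseteq [E_n(R,I),E_n(R)]$. Since $E_n(R,I)$ is generated by elements of the form ${}^{\epsilon}t_{ij}(x)$ with $\epsilon\in E_n(R)$, $x\in I$, and since commutator subgroups are normal in the relevant sense here (conjugating a commutator $[{}^{\epsilon'}t_{ij}(x),\eta]$ by $\epsilon\in E_n(R)$ stays inside $[E_n(R,I),E_n(R)]$), it suffices to show each $t_{ij}(x)$ with $x\in I$ lies in $[E_n(R,I),E_n(R)]$. For $n\geq 3$ pick $k\notin\{i,j\}$ and use relation (R3): $t_{ij}(x)=[t_{ik}(x),t_{kj}(1)]$ with $t_{ik}(x)\in E_n(R,I)$ (indeed in $E_n(I)$) and $t_{kj}(1)\in E_n(R)$. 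This gives the inclusion immediately; this step is routine.

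The heart of the proof is the inclusion $[C_n(R,I),E_n(R)]\subseteq E_n(R,I)$. Let $\sigma\in C_n(R,I)$; it is enough to show $[\sigma,t_{kl}(1)]\in E_n(R,I)$ for all $k\neq l$, since such commutators generate $[C_n(R,I),E_n(R)]$ as a subgroup normalized by $E_n(R)$ (using Lemma~\ref{pre} to rewrite ${}^{\epsilon}[\sigma,t_{kl}(1)]$ appropriately, or more directly observing that $[C_n(R,I),E_n(R)]$ is normal in $GL_n(R)$). Now because $\sigma\in C_n(R,I)$, its image in $GL_n(R/I)$ is central, hence scalar on each block; concretely this forces $\sigma_{ij}\in I$ for $i\neq j$ and $\sigma_{ii}-\sigma_{jj}\in I$ for all $i,j$. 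By the hypothesis that $R$ is almost commutative and $n\geq 3$, one invokes the main theorem of the paper (Theorem~\ref{mthm1}): the elementary transvections $t_{kl}(\sigma_{ij})$ and $t_{kl}(\sigma_{ii}-\sigma_{jj})$ are each products of boundedly many conjugates ${}^{\epsilon}\sigma^{\pm1}$ with $\epsilon\in E_n(R)$. Then I would expand $[\sigma,t_{kl}(1)]$ entrywise and express it, via a finite sequence of applications of the relations in Lemma~\ref{3} together with Lemma~\ref{5}, as a product of elementary transvections whose parameters all lie in $I$ (they are $\mathbb{Z}$-linear combinations of the $\sigma_{ij}$ and $\sigma_{ii}-\sigma_{jj}$, all in $I$) — more precisely one shows each such $t_{kl}(y)$ with $y\in I$ obtained this way is conjugate under $E_n(R)$ to something built from the $t_{**}(\sigma_{\bullet\bullet})$, hence lies in $E_n(R,I)$.

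The main obstacle is the bookkeeping in this last step: one must verify that the commutator $[\sigma,t_{kl}(1)]$, though a priori only an element of $GL_n(R,I)$, actually decomposes into elementary transvections with parameters in $I$ in a way that keeps everything inside the \emph{normal closure} $E_n(R,I)$ rather than merely the principal congruence subgroup — this is exactly the subtlety that the theme of \cite{SV}, repackaged in Theorem~\ref{mthm1}, is designed to overcome, since it produces the needed transvections explicitly as $E_n(R)$-conjugates of $\sigma^{\pm1}$. Once Theorem~\ref{mthm1} is in hand the argument is short; without it this is the technical core of the Sandwich Classification Theorem. I would also note that the almost-commutativity hypothesis enters only through Theorem~\ref{mthm1} (which in the body is proved for commutative $R$, with the almost commutative case following by a standard reduction), and that no localization is needed anywhere.
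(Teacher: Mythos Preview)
The paper does not prove Theorem~\ref{8} at all: its proof is a one-line citation to Vaserstein (\cite{vaserstein}, Corollary~14). Theorem~\ref{8} is a \emph{prerequisite} imported from the literature, used only in the converse direction of the Sandwich Classification corollary; the paper's own contribution (Theorem~\ref{mthm1}) is logically independent of it.

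Your attempt to supply an actual proof has a genuine gap in the hard inclusion $[C_n(R,I),E_n(R)]\subseteq E_n(R,I)$. You invoke Theorem~\ref{mthm1}, but that theorem points the wrong way for this purpose: it expresses $t_{kl}(\sigma_{ij})$ as a product of $E_n(R)$-conjugates of $\sigma^{\pm1}$, which places $t_{kl}(\sigma_{ij})$ in the $E_n(R)$-normalized closure of $\{\sigma\}$, not in $E_n(R,I)$. For $\sigma\in C_n(R,I)$ you already know $\sigma_{ij}\in I$, so $t_{kl}(\sigma_{ij})\in E_n(I)\subseteq E_n(R,I)$ trivially; Theorem~\ref{mthm1} adds nothing here. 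What you actually need is to show that the full commutator $[\sigma,t_{kl}(1)]\in GL_n(R,I)$ decomposes as a product of $E_n(R)$-conjugates of $I$-elementary transvections, and this is precisely the nontrivial content of Vaserstein's argument (or of Suslin's normality theorem combined with a level calculation). Your sketch ``expand entrywise and use Lemma~\ref{3} and Lemma~\ref{5}'' does not explain how to keep the decomposition inside $E_n(R,I)$ rather than merely inside $GL_n(R,I)$; the obstacle you correctly identify in your last paragraph is not resolved by Theorem~\ref{mthm1}. Finally, Theorem~\ref{mthm1} is proved in the paper only for commutative $R$, and no ``standard reduction'' to the almost commutative case is given or cited, so that part of your plan is also unfounded.
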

\begin{proof}
See \cite{vaserstein}, Corollary 14.
\end{proof}
\section{Sandwich classification for $GL_n(R)$}\label{sec4}
In this section $n$ denotes a natural number greater or equal to $3$ and $R$ a commutative ring.
\begin{definition}
Let $\sigma\in GL_n(R)$. Then a matrix of the form $^{\epsilon}\sigma^{\pm 1}$ where $\epsilon\in E_n(R)$ is called an {\it elementary $\sigma$-conjugate}.
\end{definition}
\begin{theorem}\label{mthm1}
Let $\sigma\in GL_n(R)$, $i\neq j$ and $k\neq l$. Then 
\begin{enumerate}[(i)]
\item $t_{kl}(\sigma_{ij})$ is a product of $8$ elementary $\sigma$-conjugates and
\item $t_{kl}(\sigma_{ii}-\sigma_{jj})$ is a product of $24$ elementary $\sigma$-conjugates.
\end{enumerate}
\end{theorem}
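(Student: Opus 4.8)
The plan is to reduce everything to a single, concrete commutator identity and then bootstrap. First I would observe that by Lemma \ref{5}, once we produce some $t_{k'l'}(\sigma_{ij})$ (for \emph{one} admissible pair $k'\neq l'$) as a product of $m$ elementary $\sigma$-conjugates, we immediately get $t_{kl}(\sigma_{ij})$ for \emph{every} admissible pair as a product of $m$ elementary $\sigma$-conjugates: indeed conjugating by a suitable product of $p_{\bullet\bullet}$'s (which lie in $E_n(R)$) permutes the indices, and by Lemma \ref{pre2}(i) conjugation by an element of $E_n(R)$ does not increase the count. So for part (i) it suffices to find \emph{some} elementary transvection with entry $\sigma_{ij}$ expressed via $8$ elementary $\sigma$-conjugates; and since $n\geq 3$ we always have a third index available.

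The core computation, in the spirit of \cite{SV}, is to extract the entry $\sigma_{ij}$ by sandwiching $\sigma$ between elementary transvections. Pick an index $h\notin\{i,j\}$. The key point is that $[\sigma, t_{hi}(1)] = \sigma t_{hi}(1)\sigma^{-1} t_{hi}(-1)$ is a matrix whose deviation from $e$ is controlled by column $i$ of $\sigma$ (via $\sigma e^{hi}\sigma^{-1}$ this is really $\sigma_{*i}\cdot(\text{row } h \text{ of }\sigma^{-1})$), and then commuting once more with an elementary transvection $t_{jh}(1)$ (or $t_{jk}(1)$ for a fresh index) kills the unwanted $\sigma^{-1}$-row contributions by relations (R2)--(R3) and leaves a genuine elementary transvection whose off-diagonal entry is exactly $\sigma_{ij}$ (up to sign, which is absorbed by replacing $1$ with $-1$). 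Concretely I expect an identity of the shape $t_{kl}(\pm\sigma_{ij}) = [\,[\sigma,t_{ab}(1)],\,t_{cd}(1)\,]$ for an appropriate choice of distinct indices $a,b,c,d$ drawn from $\{i,j,h\}$ with multiplicity. Expanding the outer commutator $[g,t_{cd}(1)] = g\,t_{cd}(1)\,g^{-1}\,t_{cd}(-1)$ where $g=[\sigma,t_{ab}(1)]=\sigma\,t_{ab}(1)\,\sigma^{-1}\,t_{ab}(-1)$, we see $g$ is a product of $2$ elementary $\sigma$-conjugates (namely $^{e}\sigma$ and $^{t_{ab}(1)}\sigma^{-1}$, the elementary transvections being absorbed). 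Hence $g\,t_{cd}(1)\,g^{-1}\,t_{cd}(-1)$ is a product of $2+2=4$ elementary $\sigma$-conjugates. That gives only $4$, so the claimed bound $8$ presumably comes from the fact that one genuinely needs a double commutator of $g$ with \emph{two} elementary transvections — or that the raw identity produces $t_{kl}(\sigma_{ij})$ times a harmless extra factor that must itself be cleared using another copy of $g^{\pm1}$ — giving $8$ after being generous; I would nail down the exact arrangement by direct matrix multiplication, using (R2)--(R3) to verify the unwanted terms cancel. The main obstacle is precisely this bookkeeping: choosing the indices so that every spurious transvection produced by $\sigma e^{ab}\sigma^{-1}$ either lands in a position that commutes away or telescopes, and then counting the elementary $\sigma$-conjugates honestly rather than optimistically.

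For part (ii), the target $\sigma_{ii}-\sigma_{jj}$ is a \emph{diagonal} difference, so it cannot be read off from a single column of $\sigma$; the standard trick is to produce it as a sum (or difference) of off-diagonal entries of a conjugate of $\sigma$. The plan is: let $\tau := {}^{p_{ij}}\sigma$ or, better, $\tau := {}^{t_{ij}(1)}\sigma$, and compute $\tau_{ji}$ or $\tau_{ij}$; one finds such an entry equals $\sigma_{ii}-\sigma_{jj}$ plus correction terms of the form $\pm\sigma_{\bullet\bullet}$ that are themselves off-diagonal entries of $\sigma$. Explicitly, $\big({}^{t_{ij}(1)}\sigma\big)_{ij} = \sigma_{ij} - (\sigma_{ii}-\sigma_{jj}) - \sigma_{ji}$ (up to indices/signs), so $\sigma_{ii}-\sigma_{jj}$ is an $R$-linear combination of three entries, each of which is either an off-diagonal entry of $\sigma$ or an off-diagonal entry of an $E_n(R)$-conjugate of $\sigma$. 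By part (i) applied three times (the conjugated matrix $^{t_{ij}(1)}\sigma$ has its "off-diagonal entry" handled identically, since $^{t_{ij}(1)}\sigma$ is still a single elementary $\sigma$-conjugate, and Lemma \ref{pre2} keeps the count at $8$ per entry), and using (R1) to add the three transvections $t_{kl}(\cdot)$, we get $8 \times 3 = 24$ elementary $\sigma$-conjugates — which matches the claimed bound exactly. The routine part here is the explicit entry computation of a single conjugate $^{t_{ij}(1)}\sigma$; the only subtlety is making sure the three constituent transvections share a common index pair $(k,l)$ so that they can be multiplied via (R1) without extra conjugations, which is again arranged using Lemma \ref{5}.
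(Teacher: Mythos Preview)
Your treatment of part~(ii) is essentially the paper's argument: conjugate $\sigma$ by $t_{ji}(1)$, observe that the $(j,i)$ entry of the result equals $\sigma_{ii}-\sigma_{jj}+\sigma_{ji}-\sigma_{ij}$, and apply (i) three times for a total of $3\times 8=24$. That part is fine.

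Part~(i), however, has a genuine gap. The identity you ``expect'', namely $t_{kl}(\pm\sigma_{ij}) = [[\sigma,t_{ab}(1)],t_{cd}(1)]$, does not hold. Writing $g=[\sigma,t_{ab}(1)]=(e+\sigma_{*a}\sigma'_{b*})\,t_{ab}(-1)$, the rank-one perturbation $\sigma_{*a}\sigma'_{b*}$ carries an entire row $\sigma'_{b*}$ of $\sigma^{-1}$, and one further commutator with an elementary transvection cannot collapse this to a single off-diagonal entry of $\sigma$: the entries of $\sigma^{-1}$ do not ``commute away'' via (R2)--(R3), since $g$ is not a product of elementary transvections with controlled entries. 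Your own count ($4$ rather than $8$) and the hedging about ``extra cleaning'' are symptoms of this; the bookkeeping you defer is not routine, it is where the actual idea lives.

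The mechanism you are missing is the paper's choice of $\tau:=t_{21}(-\sigma_{23})t_{31}(\sigma_{22})\in E_n(R)$, whose entries are taken from $\sigma$ itself. Using commutativity of $R$ one checks that right multiplication by $\tau^{-1}$ fixes the second row of $\sigma$, so $\xi:={}^{\sigma}\tau^{-1}$ has \emph{trivial} second row. This is the crucial point: now $[t_{32}(1),\xi]$ is honestly a product $\prod_{i\neq 2}t_{i2}(x_i)$ of elementary transvections. Lemma~\ref{pre} then splits $\zeta:={}^{\tau^{-1}}[t_{32}(1),[\tau,\sigma]]$ as $[\tau^{-1},t_{32}(1)]\cdot[t_{32}(1),\xi]$; the first factor is explicitly $t_{31}(-\sigma_{23})$, and one final commutator $[t_{12}(1),\zeta]$ isolates $t_{32}(\sigma_{23})$. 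The count runs $2\to 4\to 4\to 8$ via Lemma~\ref{pre2}. Without this $\sigma$-dependent $\tau$ making a row of $\xi$ trivial, there is no way to strip off the $\sigma^{-1}$ contributions.
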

\begin{proof}
(i) Set $\tau:=t_{21}(-\sigma_{23})t_{31}(\sigma_{22})$. One checks easily that the second row of $\sigma\tau^{-1}$ equals the second row of $\sigma$ and hence the second row of 
$\xi:={}^{\sigma}\tau^{-1}$ is trivial. Set  
\begin{align*}
\zeta:={}^{\tau^{-1}}[t_{32}(1),[\tau,\sigma]]={}^{\tau^{-1}}[t_{32}(1),\tau\xi]\overset{L.\ref{pre}}{=}[\tau^{-1},t_{32}(1)][t_{32}(1),\xi].
\end{align*}
One checks easily that $[\tau^{-1},t_{32}(1)]=t_{31}(-\sigma_{23})$ and $[t_{32}(1),\xi]=\prod\limits_{i\neq 2 }t_{i2}(x_i)$ for some $x_1,x_3,x_4,\dots,x_n\in R$. Hence $\zeta=t_{31}(-\sigma_{23})\prod\limits_{i\neq 2 }t_{i2}(x_i)$. It follows that $[t_{12}(1),\zeta]=t_{32}(\sigma_{23})$. Hence we have shown
\[[t_{12}(1),{}^{t_{21}(\sigma_{23})t_{31}(-\sigma_{22})}[t_{32}(1),[t_{21}(-\sigma_{23})t_{31}(\sigma_{22}),\sigma]]]=t_{32}(\sigma_{23}).\]
This implies that $t_{32}(\sigma_{23})$ is a product of $8$ elementary $\sigma$-conjugates. It follows from Lemma \ref{5} that $t_{kl}(\sigma_{23})$ is a product of $8$ elementary $\sigma$-conjugates. Since one can bring $\sigma_{ij}$ to position $(2,3)$ by conjugating monomial matrices in $E_n(R)$ (see Definition \ref{4}) to $\sigma$, the assertion of (i) follows.\\
\\
(ii) Clearly the entry of $^{t_{ji}(1)}\sigma$ at position $(j,i)$ equals $\sigma_{ii}-\sigma_{jj}+\sigma_{ji}-\sigma_{ij}$. Applying (i) to $^{t_{ji}(1)}\sigma$ we get that $t_{kl}(\sigma_{ii}-\sigma_{jj}+\sigma_{ji}-\sigma_{ij})$ is a product of $8$ elementary $\sigma$-conjugates (note that any elementary $^{t_{ji}(1)}\sigma$-conjugate is also an elementary $\sigma$-conjugate). Applying (i) to $\sigma$ we get that $t_{kl}(\sigma_{ij}-\sigma_{ji})=t_{kl}(\sigma_{ij})t_{kl}(-\sigma_{ji})$ is a product of $16$ elementary $\sigma$-conjugates. It follows that $t_{kl}(\sigma_{ii}-\sigma_{jj})=t_{kl}(\sigma_{ii}-\sigma_{jj}+\sigma_{ji}-\sigma_{ij})t_{kl}(\sigma_{ij}-\sigma_{ji})$ is a product of $24$ elementary $\sigma$-conjugates.
\end{proof}

As a corollary we get the Sandwich Classification Theorem for $GL_n(R)$. Note that if $\sigma\in GL_n(R)$ and $I$ is an ideal of $R$, then $\sigma\in C_n(R,I)$ if and only if $\sigma_{ij},\sigma_{ii}-\sigma_{jj}\in I$ for any $i\neq j$.
\begin{corollary}
Let $H$ be a subgroup of $GL_n(R)$. Then $H$ is normalized by $E_n(R)$ if and only if 
\begin{equation}
E_n(R,I)\subseteq H\subseteq C_n(R,I)
\end{equation}
for some ideal $I$ of $R$.
\end{corollary}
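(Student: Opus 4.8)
The plan is to prove the Corollary by combining Theorem \ref{mthm1} with the standard facts already assembled in the excerpt. The statement is an ``if and only if'', so I would treat the two directions separately, with the forward direction being where the real content (Theorem \ref{mthm1}) is used.

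\smallskip

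\emph{The easy direction.} Suppose $E_n(R,I)\subseteq H\subseteq C_n(R,I)$ for some ideal $I$. Since $C_n(R,I)$ is a normal subgroup of $GL_n(R)$ (noted in its definition), it is in particular normalized by $E_n(R)$; and $E_n(R,I)$ is by definition the normal closure of $E_n(I)$ in $E_n(R)$, hence also normalized by $E_n(R)$. Now for $\epsilon\in E_n(R)$ and $h\in H$, commutator manipulation gives ${}^{\epsilon}h=[\epsilon,h]\,h$, and $[\epsilon,h]\in[E_n(R),C_n(R,I)]\subseteq E_n(R,I)\subseteq H$ by Theorem \ref{8} (applicable since $R$ is commutative, hence almost commutative, and $n\geq 3$). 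Therefore ${}^{\epsilon}h\in H$, so $H$ is normalized by $E_n(R)$.

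\smallskip

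\emph{The main direction.} Suppose $H$ is normalized by $E_n(R)$. Define $I:=\{x\in R\mid t_{12}(x)\in H\}$. First I would check $I$ is an ideal: it is an additive subgroup by relation (R1), and closed under multiplication by $R$ using relation (R3) together with the $E_n(R)$-normality of $H$ (conjugating $t_{12}(x)\in H$ by a suitable elementary transvection produces $t_{12}(rx)$ or $t_{12}(xr)$ up to commutators lying in $H$). Next, $E_n(R,I)\subseteq H$: every generator $t_{ij}(x)$ with $x\in I$ is an $E_n(R)$-conjugate of some $t_{12}(y)$ with $y\in I$ — move indices using the monomial matrices $p_{ij}$ of Definition \ref{4} and Lemma \ref{5} — hence lies in $H$, and $H$ being $E_n(R)$-normal contains the whole normal closure $E_n(R,I)$. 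For the upper bound $H\subseteq C_n(R,I)$: take any $\sigma\in H$ and any $i\neq j$, $k\neq l$. By Theorem \ref{mthm1}(i), $t_{kl}(\sigma_{ij})$ is a product of $8$ elementary $\sigma$-conjugates; since $\sigma\in H$ and $H$ is $E_n(R)$-normal, each elementary $\sigma$-conjugate ${}^{\epsilon}\sigma^{\pm1}$ lies in $H$, so $t_{kl}(\sigma_{ij})\in H$. Choosing $(k,l)=(1,2)$ shows $\sigma_{ij}\in I$. Likewise Theorem \ref{mthm1}(ii) gives $t_{12}(\sigma_{ii}-\sigma_{jj})\in H$, so $\sigma_{ii}-\sigma_{jj}\in I$. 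By the remark immediately preceding the Corollary, $\sigma_{ij},\sigma_{ii}-\sigma_{jj}\in I$ for all $i\neq j$ is exactly the condition $\sigma\in C_n(R,I)$. Hence $H\subseteq C_n(R,I)$, completing this direction.

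\smallskip

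I do not expect a genuine obstacle here — the corollary is deliberately a thin wrapper around Theorem \ref{mthm1}. The only point requiring a little care is verifying that $I$ is genuinely a two-sided ideal and that the index-shuffling via $p_{ij}$ really lets one replace an arbitrary $t_{kl}$ by $t_{12}$ (and vice versa); both follow mechanically from Lemma \ref{3} and Lemma \ref{5}, so I would state them and move on. The structural heart of the argument is the implication ``$\sigma\in H$, $H$ normalized by $E_n(R)$ $\Longrightarrow$ every entry $\sigma_{ij}$ and every difference $\sigma_{ii}-\sigma_{jj}$ lies in $I$'', which is precisely what Theorem \ref{mthm1} delivers.
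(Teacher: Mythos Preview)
Your proposal is correct and follows essentially the same route as the paper: define $I=\{x\in R\mid t_{12}(x)\in H\}$, invoke Theorem~\ref{mthm1} to get $\sigma_{ij},\sigma_{ii}-\sigma_{jj}\in I$ and hence $H\subseteq C_n(R,I)$, and for the converse use Theorem~\ref{8}. The paper's version is terser---it simply asserts that $I$ is an ideal and that $E_n(R,I)\subseteq H$ without spelling out the index-shuffling or the ideal verification---but the logical skeleton is identical.
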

\begin{proof}
First suppose that $H$ is normalized by $E_n(R)$. Let $I$ be the ideal of $R$ defined by $I:=\{x\in R\mid t_{12}(x)\in H\}$. Then clearly $E_n(R,I)\subseteq H$. It remains to show that $H\subseteq C_n(R,I)$, i.e. that if $\sigma\in H$, then $\sigma_{ij},\sigma_{ii}-\sigma_{jj}\in I$ for any $i\neq j$. But that follows from the previous theorem. Suppose now that (1) holds for some ideal $I$. Then it follows from the standard commutator formulas in Theorem \ref{8} that $H$ is normalized by $E_n(R)$.
\end{proof}

\section{The even-dimensional orthogonal group $O_{2n}(R)$}\label{sec5}
In this section $n$ denotes a natural number, $R$ a commutative ring and $I$ an ideal of $R$. We shall recall the definitions of the even-dimensional orthogonal group $O_{2n}(R)$ and the following subgroups of $O_{2n}(R)$; the elementary subgroup $EO_{2n}(R)$, the preelementary subgroup $EO_{2n}(I)$ of level $I$, the elementary subgroup $EO_{2n}(R,I)$ of level $I$, the principal congruence subgroup $O_{2n}(R,I)$ of level $I$, and the full congruence subgroup $CO_{2n}(R,I)$ of level $I$.
\subsection{The even-dimensional orthogonal group}
\begin{definition}
Set $V:=R^{2n}$. We use the following indexing for the elements of the standard basis of $V$: $(e_1,\dots,e_n,e_{-n},\dots,e_{-1})$.
That means that $e_i$ is the column whose $i$-th coordinate is one and all the other coordinates are zero if $1\leq i\leq n$ and the column whose $(2n+1+i)$-th coordinate is one and all the other coordinates are zero if $-n\leq i \leq -1$. Let $p\in M_n(R)$ be the matrix with ones on the skew diagonal and zeros elsewhere. We define the quadratic form 
\begin{align*}
q:V&\rightarrow R\\v&\mapsto v^t\begin{pmatrix} 0 & p \\0& 0 \end{pmatrix}v.
\end{align*} 
The subgroup $O_{2n}(R):=\{\sigma\in GL_{2n}(R)\mid q(\sigma v)=q(v) ~\forall v\in V\}$ of $GL_{2n}(R)$ is called {\it (even-dimensional) orthogonal group}.
\end{definition}
\begin{remark}
The even-dimensional orthogonal groups are special cases of the hyperbolic unitary groups, cf. Example \ref{30}.
\end{remark}
\begin{definition}
We define $\Omega:=\{1,...,n,-n,...,-1\}$.
\end{definition}
\begin{lemma}
Let $\sigma\in GL_{2n}(R)$. Then $\sigma\in O_{2n}(R)$ if and only if
\begin{enumerate}[(i)]
\item $\sigma'_{ij}=\sigma_{-j,-i}~ \forall i,j\in\Omega$  and
\item $q(\sigma_{*j})=0 ~\forall j\in\Omega$. 
\end{enumerate}
\end{lemma}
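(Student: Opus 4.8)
The plan is to pass from the quadratic form $q$ to its polar (symmetric bilinear) form and to recognize condition (i) as the statement that $\sigma$ preserves that bilinear form. I would begin by setting $B:=\begin{pmatrix}0&p\\0&0\end{pmatrix}\in M_{2n}(R)$, so that $q(v)=v^tBv$, and $M:=B+B^t=\begin{pmatrix}0&p\\p&0\end{pmatrix}$. Since $p^t=p$ and $p^2=e^{n\times n}$, one gets $M^t=M$ and $M^2=e^{2n\times 2n}$, and a direct computation with the $\Omega$-indexing shows $(Ma^tM)_{ij}=a_{-j,-i}$ for every $a\in M_{2n}(R)$ and all $i,j\in\Omega$. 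Let $b(u,v):=u^tMv$; then $b(u,v)=q(u+v)-q(u)-q(v)$, explicitly $b(u,v)=\sum_{i\in\Omega}u_iv_{-i}$ and $q(v)=\sum_{i=1}^{n}v_iv_{-i}$, and in particular $q(e_j)=0$ for every $j\in\Omega$ because $j\neq -j$.

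Next I would record the reformulation of (i). For $\sigma\in GL_{2n}(R)$ the identity $\sigma^tM\sigma=M$ is equivalent to $\sigma^tM=M\sigma^{-1}$ (right‑multiply by $\sigma^{-1}$), hence to $\sigma^{-1}=M\sigma^tM$ (left‑multiply by $M$ and use $M^2=e^{2n\times 2n}$); by the computation above the last equality says exactly $\sigma'_{ij}=\sigma_{-j,-i}$ for all $i,j\in\Omega$. Thus (i) holds if and only if $b(\sigma u,\sigma v)=b(u,v)$ for all $u,v\in V$.

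It then remains to establish: for $\sigma\in GL_{2n}(R)$, $\sigma\in O_{2n}(R)$ iff $\sigma$ preserves $b$ and $q(\sigma_{*j})=0$ for all $j\in\Omega$ (observe $q(\sigma_{*j})=q(\sigma e_j)$). For the direction "$\Rightarrow$" one polarizes $q(\sigma w)=q(w)$ to get $b(\sigma u,\sigma v)=b(u,v)$, and $q(\sigma e_j)=q(e_j)=0$. For "$\Leftarrow$" I would consider $f(v):=q(\sigma v)-q(v)$: preservation of $b$ gives $f(u+v)=f(u)+f(v)$, while $f(rv)=r^2f(v)$ for $r\in R$, so for $v=\sum_{j\in\Omega}r_je_j$ one obtains $f(v)=\sum_{j\in\Omega}r_j^2f(e_j)=\sum_{j\in\Omega}r_j^2\,q(\sigma e_j)=0$; hence $q(\sigma v)=q(v)$ for all $v\in V$, i.e. $\sigma\in O_{2n}(R)$. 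Combining this equivalence with the reformulation of (i) in the previous paragraph finishes the proof.

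The only non‑formal point — and the step I expect to be the main obstacle — is the "$\Leftarrow$" direction of the last equivalence: over a general commutative ring $q$ is not recovered from its polarization $b$, so "$\sigma$ preserves $b$" does not by itself force "$\sigma$ preserves $q$". The argument circumvents this by exploiting that $f(v)=q(\sigma v)-q(v)$ is simultaneously additive and satisfies $f(rv)=r^2f(v)$, so $f$ must vanish identically once it vanishes on the standard basis; everything else is routine bookkeeping with the matrix $M$.
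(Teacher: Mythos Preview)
Your argument is correct. The key observation that $f(v)=q(\sigma v)-q(v)$ is both additive (once $\sigma$ preserves $b$) and satisfies $f(rv)=r^2f(v)$, and hence is determined by its values on the standard basis, is exactly what is needed to avoid the usual pitfall that $q$ is not recovered from $b$ in characteristic~$2$; the rest is routine verification, and your computations with $M$ are accurate.

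As for comparison with the paper: the paper does not actually prove this lemma but simply refers to \cite{bak-vavilov}, p.~167. Your write-up therefore supplies a self-contained proof where the paper gives none. The argument you give is in fact the standard one and is essentially what one finds in the cited reference (in the more general unitary setting), so there is no substantive methodological difference to report.
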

\begin{proof}
See \cite{bak-vavilov}, p.167.
\end{proof}
\begin{lemma}\label{16}
Let $\sigma\in O_{2n}(R)$, $x \in R^*$ and $k\in \Omega$. Then the statements below are true.
\begin{enumerate}[(i)]
\item If the $k$-th column of $\sigma$ equals $e_kx$ then the $(-k)$-th row of $\sigma$ equals $x^{-1}e^t_{-k}$.
\item If the $k$-th row of $\sigma$ equals $xe^t_k$ then the $(-k)$-th column of $\sigma$ equals $e_{-k}x^{-1}$.
\end{enumerate}
\end{lemma}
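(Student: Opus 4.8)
The plan is to exploit the two characterizing conditions for $\sigma\in O_{2n}(R)$ from the previous lemma, namely $\sigma'_{ij}=\sigma_{-j,-i}$ and $q(\sigma_{*j})=0$ for all $i,j\in\Omega$. For part (i), suppose the $k$-th column of $\sigma$ is $e_kx$. Then $\sigma_{ik}=0$ for every $i\neq k$ and $\sigma_{kk}=x$. I would translate the $(-k)$-th row of $\sigma$ into entries of $\sigma^{-1}$ using condition (i): for each $j\in\Omega$ we have $\sigma_{-k,j}=\sigma'_{-j,k}$, i.e. the $(-k)$-th row of $\sigma$ is read off from the $k$-th column of $\sigma^{-1}$. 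So it suffices to understand the $k$-th column of $\sigma^{-1}$.

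Next I would use $\sigma\sigma^{-1}=e$. Reading the $k$-th column of this identity gives $\sigma\cdot\sigma'_{*k}=e_k$. Since the $k$-th column of $\sigma$ is $e_kx$ with $x$ invertible, $e_k$ lies in the image of $\sigma$; more usefully, looking at the individual coordinate equations $\sum_{m}\sigma_{im}\sigma'_{mk}=\delta_{ik}$ and using that the only nonzero entry in the $k$-th column of $\sigma$ sits in row $k$, one extracts that the $k$-th column of $\sigma^{-1}$ is $e_{-k}$ times something — more precisely, one should instead read $\sigma^{-1}\sigma=e$: the $(-k)$-th row of $\sigma^{-1}$ dotted against columns of $\sigma$. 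Actually the cleanest route is: from $\sigma'_{ij}=\sigma_{-j,-i}$ applied again, the hypothesis ``$k$-th column of $\sigma$ equals $e_kx$'' is equivalent to ``$(-k)$-th row of $\sigma^{-1}$ equals $x\,e^t_{-k}$'' (transposing the index pattern), and then the identity $\sigma^{-1}\sigma=e$ restricted to that row forces the $(-k)$-th row of $\sigma$ to have its only possibly-nonzero contribution interacting with column $-k$, yielding $\sigma_{-k,-k}=x^{-1}$ and $\sigma_{-k,j}=0$ for $j\neq -k$. Part (ii) then follows by the symmetric argument (or by applying (i) to $\sigma^{-1}$, since $O_{2n}(R)$ is a group and $(\sigma^{-1})^{-1}=\sigma$, noting that a row condition on $\sigma$ is a column condition on $\sigma^{-1}$ via condition (i) of the characterization).

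I expect the main obstacle to be purely bookkeeping: keeping the $\Omega$-indexing straight and making sure the symmetry $\sigma'_{ij}=\sigma_{-j,-i}$ is applied with the signs and transpositions in the right places, so that a ``$k$-th column equals $e_kx$'' statement about $\sigma$ is correctly converted into the dual ``$(-k)$-th row equals $xe^t_{-k}$'' statement about $\sigma^{-1}$ before the orthogonality relation $\sigma^{-1}=$ (the adjoint-type matrix) is used. There is no real analytic or algebraic difficulty here — once the correspondence between rows/columns of $\sigma$ and columns/rows of $\sigma^{-1}$ is nailed down, the invertibility of $x$ and the matrix identity $\sigma\sigma^{-1}=e$ immediately pin down the claimed entries. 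So the write-up should mostly consist of carefully invoking part (i) of the characterization lemma twice and one elementary computation with $\sigma\sigma^{-1}=e$.
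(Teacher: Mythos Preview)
Your approach is correct and matches the paper's: the paper simply writes ``Follows from (i) in the previous lemma,'' and the substance of that one-liner is precisely what you describe in your ``cleanest route'' --- use $\sigma'_{ij}=\sigma_{-j,-i}$ to translate the hypothesis $\sigma_{*k}=e_kx$ into $(\sigma^{-1})_{-k,*}=x\,e^t_{-k}$, then read off the $(-k)$-th row of $\sigma$ from $\sigma^{-1}\sigma=e$. Your write-up meanders a bit before settling on this; in the final version just present that route directly and drop the false starts.
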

\begin{proof}
Follows from (i) in the previous lemma.
\end{proof}
\subsection{The elementary subgroup}
\begin{definition}
If $i,j\in\Omega$ such that $i\neq \pm j$ and $x\in R$, then the matrix
\[T_{ij}(x):=e+x e^{ij}-x e^{-j,-i}\in O_{2n}(R)\]
is called an {\it elementary orthogonal transvection}. The subgroup of $O_{2n}(R)$ generated by all elementary orthogonal transvections is called {\it elementary orthogonal group} and is denoted by $EO_{2n}(R)$. An elementary orthogonal transvection $T_{ij}(x)$ is called $I$-{\it elementary} if $x\in I$. The subgroup of $O_{2n}(R)$ generated by all $I$-elementary orthogonal transvections is called {\it preelementary  subgroup of level $I$} and is denoted by $EO_{2n}(I)$. Its normal closure in $EO_{2n}(R)$ is called {\it elementary subgroup of level $I$} and is denoted by $EO_{2n}(R,I)$. 
\end{definition}
\begin{lemma}\label{18}
The relations
\begin{align*}T_{ij}(x)&=T_{-j,-i}(-x)\tag{R1},\\
T_{ij}(x)T_{ij}(y)&=T_{ij}(x+y)\tag{R2},\\
[T_{ij}(x),T_{hk}(y)]&=e,\tag{R3}\\
[T_{ij}(x),T_{jk}(y)]&=T_{ik}(xy),\tag{R4}\\
[T_{ij}(x),T_{j,-i}(y)]&=e\tag{R5}\end{align*}
hold where $h\neq j,-i$ and $k\neq i,-j$ in \textnormal{(R3)} and $i\neq \pm k$ in \textnormal{(R4)}.
\end{lemma}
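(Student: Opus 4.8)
The plan is to collapse the whole lemma to one short matrix computation by introducing the shorthand $f_{ij}:=e^{ij}-e^{-j,-i}$ (for $i\neq\pm j$), so that $T_{ij}(x)=e+xf_{ij}$. The only input needed is the multiplication rule $e^{ab}e^{cd}=\delta_{bc}e^{ad}$, from which one reads off the general product formula $f_{ij}f_{kl}=\delta_{jk}e^{il}-\delta_{j,-l}e^{i,-k}-\delta_{k,-i}e^{-j,l}+\delta_{i,l}e^{-j,-k}$. Relation (R1) is then immediate from $f_{ij}=-f_{-j,-i}$. Taking $k=i,\ l=j$ in the product formula and using $i\neq\pm j$ gives $f_{ij}^2=0$; hence $T_{ij}(x)T_{ij}(y)=e+(x+y)f_{ij}+xy\,f_{ij}^2=e+(x+y)f_{ij}$, which is (R2), and moreover $T_{ij}(x)^{-1}=e-xf_{ij}$, a fact I will use repeatedly below.

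For the three commutator relations I would expand $[T_{ij}(x),T_{kl}(y)]=(e+xf_{ij})(e+yf_{kl})(e-xf_{ij})(e-yf_{kl})$ and simplify using the product formula together with $f_{ij}^2=f_{kl}^2=0$. For (R3) the hypotheses $h\neq j,-i$ and $k\neq i,-j$ annihilate all four $\delta$'s in each of $f_{ij}f_{hk}$ and $f_{hk}f_{ij}$, so both products vanish and the two transvections commute outright. For (R5) one first computes $f_{ij}f_{j,-i}=f_{j,-i}f_{ij}=e^{i,-i}$ (the other terms die because $i\neq\pm j$), and then checks that every length-three product formed from $xf_{ij}$, $yf_{j,-i}$ and $e^{i,-i}$ is zero; feeding this into the expansion collapses the commutator step by step to $e$.

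The one relation that needs genuine care is (R4). Here the product formula yields $f_{ij}f_{jk}=e^{ik}$ and $f_{jk}f_{ij}=e^{-k,-i}$, where the side conditions $i\neq\pm k$ (together with $j\neq\pm k$, which is built into $T_{jk}$ being defined at all) are precisely what kills the three unwanted terms in each product. Writing $P:=xy\,e^{ik}$ and $Q:=xy\,e^{-k,-i}$, the expansion of $(e+xf_{ij})(e+yf_{jk})(e-xf_{ij})(e-yf_{jk})$ reduces to $e+P-Q$ once one verifies that $(xf_{ij})^2$, $(yf_{jk})^2$, $P\cdot(xf_{ij})$, $P\cdot(yf_{jk})$ and $Q\cdot(yf_{jk})$ all vanish, each being a one-line check from $e^{ab}e^{cd}=\delta_{bc}e^{ad}$ and the distinctness of the indices. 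Since $P-Q=xy(e^{ik}-e^{-k,-i})=xy\,f_{ik}$ and $i\neq\pm k$ guarantees that $f_{ik}$ is a legitimate transvection matrix, this gives $[T_{ij}(x),T_{jk}(y)]=T_{ik}(xy)$. Thus the entire lemma comes down to organizing these elementary matrix-unit products; the only real obstacle is bookkeeping — making sure in each relation that the stated index restrictions are exactly the ones forcing the spurious terms to disappear.
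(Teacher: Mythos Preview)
Your proof is correct and is precisely the ``straightforward calculation'' the paper alludes to, carried out in full detail; the shorthand $f_{ij}=e^{ij}-e^{-j,-i}$ is a convenient organizing device but the underlying approach is the same direct matrix-unit computation. One minor simplification you could note in (R5): since you have already shown $f_{ij}f_{j,-i}=f_{j,-i}f_{ij}=e^{i,-i}$, the matrices $e+xf_{ij}$ and $e+yf_{j,-i}$ commute outright, so the commutator is $e$ without any need to inspect length-three products.
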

\begin{proof}
Straightforward calculation.
\end{proof}
\begin{definition}\label{19}
Let $i,j\in\Omega$ such that $i\neq\pm j$. Define $P_{ij}:=e+e^{ij}-e^{ji}+e^{-i,-j}-e^{-j,-i}-e^{ii}-e^{jj}-e^{-i,-i}-e^{-j,-j}=T_{ij}(1)T_{ji}(-1)T_{ij}(1)\in EO_{2n}(R)$. It is easy show that $(P_{ij})^{-1}=P_{ji}$.
\end{definition}
\begin{lemma}\label{20}
Let $x\in R$ and $i,j,k\in\Omega$ such that $i\neq \pm j$ and $k\neq \pm i,\pm j$. Then 
\begin{enumerate}[(i)]
\item $^{P_{ki}}T_{ij}(x)=T_{kj}(x)$ and
\item $^{P_{kj}}T_{ij}(x)=T_{ik}(x)$.
\end{enumerate}
\end{lemma}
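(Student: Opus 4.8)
\emph{Approach.} The plan is to deduce both formulas from the Steinberg-type relations (R1)--(R4) of Lemma \ref{18}; multiplying the matrices out directly would be an equivalent alternative, but conjugating with the relations is easier to bookkeep. I would prove (i) first and then obtain (ii) from it by a symmetry trick, so that no computation is done twice.

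\emph{Part (i).} I would expand $P_{ki}=T_{ki}(1)T_{ik}(-1)T_{ki}(1)$ and push $T_{ij}(x)$ through the three factors one at a time. Since $i$ is the second index of $T_{ki}(1)$ and the first index of $T_{ij}(x)$, relation (R4) gives $[T_{ki}(1),T_{ij}(x)]=T_{kj}(x)$ (this uses $k\neq\pm j$), so that $^{T_{ki}(1)}T_{ij}(x)=T_{kj}(x)T_{ij}(x)$. Conjugating this element by $T_{ik}(-1)$: relation (R4) now gives $^{T_{ik}(-1)}T_{kj}(x)=T_{ij}(-x)T_{kj}(x)$ (this uses $i\neq\pm j$), while relation (R3) shows that $T_{ik}(-1)$ commutes with $T_{ij}(x)$ (the index conditions needed are exactly $k\neq\pm i$, $k\neq\pm j$ and $i\neq\pm j$), so the intermediate element is $T_{ij}(-x)T_{kj}(x)T_{ij}(x)$. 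Finally, conjugating once more by $T_{ki}(1)$ and using the first computation again together with the commutativity relations $[T_{ki}(1),T_{kj}(x)]=e$ and $[T_{ij}(-x),T_{kj}(x)]=e$ (both instances of (R3)), all factors collapse by (R2) and leave exactly $T_{kj}(x)$. Hence $^{P_{ki}}T_{ij}(x)=T_{kj}(x)$.

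\emph{Part (ii).} Rather than redo the three-step conjugation, I would exploit the symmetry already present in Definition \ref{19}: the displayed expression for $P_{ij}$ is visibly invariant under simultaneously replacing $i$ by $-i$ and $j$ by $-j$ (the terms $e^{ij}$, $-e^{ji}$, $-e^{ii}$, $-e^{jj}$ are interchanged with $e^{-i,-j}$, $-e^{-j,-i}$, $-e^{-i,-i}$, $-e^{-j,-j}$), so $P_{kj}=P_{-k,-j}$. Using $T_{ij}(x)=T_{-j,-i}(-x)$ from (R1) and then applying part (i) to the indices $-k,-j,-i$ (for which the hypotheses of (i) are precisely the hypotheses we are given for $i,j,k$), I get
\[{}^{P_{kj}}T_{ij}(x)={}^{P_{-k,-j}}T_{-j,-i}(-x)=T_{-k,-i}(-x)=T_{ik}(x),\]
the last equality being (R1) once more.

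\emph{Main difficulty.} The computation is entirely mechanical, so I do not expect a genuine obstacle; the only thing requiring care is checking, at each of the three conjugation steps in part (i), that the index patterns match so that (R3) and (R4) actually apply (an index clash is the only way the argument could break), and the hypotheses $i\neq\pm j$ and $k\neq\pm i,\pm j$ are exactly what rule such clashes out.
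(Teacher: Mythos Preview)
Your proposal is correct and is precisely the approach indicated by the paper, which simply says ``follows from the relations in Lemma \ref{18}''; you have merely written out the three-step conjugation and the symmetry reduction that those words encode. The only minor remark is that in part (ii) the identity $P_{kj}=P_{-k,-j}$, while true, can itself be read as an instance of (R1) applied to each factor of $P_{kj}=T_{kj}(1)T_{jk}(-1)T_{kj}(1)$, keeping the whole argument inside the relation calculus.
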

\begin{proof}
Follows from the relations in Lemma \ref{18}.
\end{proof}
\subsection{Congruence subgroups}
\begin{definition}
The kernel of the group homomorphism $O_{2n}(R)\rightarrow O_{2n}(R/I)$ induced by the canonical map $R\rightarrow R/I$ is called {\it principal congruence subgroup of level} $I$ and is denoted by $O_{2n}(R,I)$. Obviously $O_{2n}(R,I)$ is a normal subgroup of $O_{2n}(R)$.
\end{definition}
\begin{definition}
The preimage of $Center(O_{2n}(R/I))$ under the group homomorphism $O_{2n}(R)\rightarrow O_{2n}(R/I)$ induced by the canonical map $R\rightarrow R/I$ is called {\it full congruence subgroup of level} $I$ and is denoted by $CO_{2n}(R,I)$. Obviously $O_{2n}(R,I)\subseteq CO_{2n}(R,I)$ and $CO_{2n}(R,I)$ is a normal subgroup of $O_{2n}(R)$.
\end{definition}
\begin{theorem}\label{23}
If $n\geq 3$, then the equalities
\begin{align*}[CO_{2n}(R,I),EO_{2n}(R)]=[EO_{2n}(R,I),EO_{2n}(R)]=EO_{2n}(R,I)\end{align*}
hold.
\end{theorem}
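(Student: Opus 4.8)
The plan is to follow the same route as for Theorem~\ref{8}. Three inclusions are at stake, and they split into two purely formal ones, which I would dispose of directly from the transvection relations of Lemma~\ref{18} and the definition of $EO_{2n}(R,I)$ as a normal closure, and one genuinely nontrivial one, namely $[CO_{2n}(R,I),EO_{2n}(R)]\subseteq EO_{2n}(R,I)$. This last inclusion is the relative standard commutator formula; it is where all the work (and, in the classical treatments, the localization) sits, and I expect it to be the main obstacle. I would handle it exactly as Theorem~\ref{8} is handled, by appealing to the literature.

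For the formal part: since $EO_{2n}(R,I)$ is by definition the normal closure of $EO_{2n}(I)$ in $EO_{2n}(R)$, it is normal in $EO_{2n}(R)$, whence $[EO_{2n}(R,I),EO_{2n}(R)]\subseteq EO_{2n}(R,I)$. Conversely, for $a\in I$ and $i\neq\pm j$ pick $k\in\Omega$ with $k\neq\pm i,\pm j$ (possible because $n\geq 3$, as $\Omega$ has $2n>4$ elements); then (R4) gives $T_{ij}(a)=[T_{ik}(a),T_{kj}(1)]\in[EO_{2n}(R,I),EO_{2n}(R)]$. Since $[N,G]$ is normal in $G$ whenever $N\trianglelefteq G$, the group $[EO_{2n}(R,I),EO_{2n}(R)]$ contains the normal closure in $EO_{2n}(R)$ of all $I$-elementary orthogonal transvections, i.e.\ all of $EO_{2n}(R,I)$; hence $[EO_{2n}(R,I),EO_{2n}(R)]=EO_{2n}(R,I)$. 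Moreover $EO_{2n}(I)\subseteq O_{2n}(R,I)\subseteq CO_{2n}(R,I)$ and $CO_{2n}(R,I)\trianglelefteq O_{2n}(R)$, so $EO_{2n}(R,I)\subseteq CO_{2n}(R,I)$ and therefore $EO_{2n}(R,I)=[EO_{2n}(R,I),EO_{2n}(R)]\subseteq[CO_{2n}(R,I),EO_{2n}(R)]$. Everything is thereby reduced to the inclusion $[CO_{2n}(R,I),EO_{2n}(R)]\subseteq EO_{2n}(R,I)$.

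For that inclusion I would argue as follows. Using $[\sigma,ab]=[\sigma,a]\,{}^{a}[\sigma,b]$, $[\sigma,a^{-1}]={}^{a^{-1}}[\sigma,a]^{-1}$ and the normality of $EO_{2n}(R,I)$ in $EO_{2n}(R)$, it suffices to show $[\sigma,T_{pq}(1)]\in EO_{2n}(R,I)$ for every $\sigma\in CO_{2n}(R,I)$ and every elementary orthogonal transvection $T_{pq}(1)$. Writing ${}^{\sigma}T_{pq}(1)=e+\sigma(e^{pq}-e^{-q,-p})\sigma^{-1}$ and using the orthogonality relations $\sigma'_{ij}=\sigma_{-j,-i}$, one sees that $[\sigma,T_{pq}(1)]={}^{\sigma}T_{pq}(1)\cdot T_{pq}(-1)$ lies in $O_{2n}(R,I)$ and is a product of two orthogonal transvections attached to column vectors that are congruent modulo $I$ to standard basis vectors. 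Rewriting such a matrix as a product of $I$-elementary orthogonal transvections and their $EO_{2n}(R)$-conjugates is precisely the classical relative standard commutator formula for the Chevalley group of type $D_n$ over a commutative ring with $n\geq 3$ (equivalently, in view of Example~\ref{30}, the case $\Lambda=R$ of the unitary commutator formula). I would simply cite it, as Theorem~\ref{8} is cited (e.g.\ \cite{bak-vavilov} or \cite{preusser} for the unitary/orthogonal version); alternatively it could be proved here by an orbit-of-a-column argument in the spirit of Theorem~\ref{mthm2}, but doing so would undercut the point of keeping the argument short. Either way, this is the step I expect to be the real obstacle.
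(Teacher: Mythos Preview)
Your proposal is correct and ultimately takes the same route as the paper: the paper's entire proof is the citation ``See \cite{bak-vavilov}, Theorem 1.1 and Lemma 5.2'', and you likewise resolve the one nontrivial inclusion by citing \cite{bak-vavilov}, merely spelling out the easy inclusions (via (R4) and normality) that the paper leaves implicit. One small slip: your reduction should be to $[\sigma,T_{pq}(x)]$ for arbitrary $x\in R$, not just $T_{pq}(1)$, since $EO_{2n}(R)$ is not in general generated by the $T_{pq}(1)$; this is inconsequential given that you defer the hard step to the literature anyway.
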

\begin{proof}
See \cite{bak-vavilov}, Theorem 1.1 and Lemma 5.2.
\end{proof}
\section{Sandwich classification for $O_{2n}(R)$}\label{sec6}
In this section $n$ denotes a natural number greater or equal to $3$ and $R$ a commutative ring.
\begin{definition}
Let $\sigma\in O_{2n}(R)$. Then a matrix of the form $^{\epsilon}\sigma^{\pm 1}$ where $\epsilon\in EO_{2n}(R)$ is called an {\it elementary (orthogonal) $\sigma$-conjugate}.
\end{definition}
\begin{theorem}\label{mthm2}
Let $\sigma\in O_{2n}(R)$, $i\neq \pm j$ and $k\neq \pm l$. Then 
\begin{enumerate}[(i)]
\item $T_{kl}(\sigma_{ij})$ is a product of $8$ elementary orthogonal $\sigma$-conjugates,
\item $T_{kl}(\sigma_{i,-i})$ is a product of $16$ elementary orthogonal $\sigma$-conjugates,
\item $T_{kl}(\sigma_{ii}-\sigma_{jj})$ is a product of $24$ elementary orthogonal $\sigma$-conjugates and
\item $T_{kl}(\sigma_{ii}-\sigma_{-i,-i})$ is a product of $48$ elementary orthogonal $\sigma$-conjugates.
\end{enumerate}
\end{theorem}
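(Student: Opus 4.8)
The plan is to mimic the proof of Theorem \ref{mthm1} as closely as possible, exploiting the close structural parallel between elementary transvections $t_{ij}(x)$ in $GL_n$ and elementary orthogonal transvections $T_{ij}(x)$ in $O_{2n}$: the relations (R2)--(R4) of Lemma \ref{18} are formally identical to (R1)--(R3) of Lemma \ref{3}, and Lemma \ref{20} is the orthogonal analogue of Lemma \ref{5}. The only genuinely new features are the extra relations (R1) (the symmetry $T_{ij}(x)=T_{-j,-i}(-x)$) and (R5), and the fact that the index set is $\Omega$ rather than $\{1,\dots,n\}$; the hypothesis $n\geq 3$ guarantees we always have enough indices pairwise distinct up to sign to carry out the commutator manipulations.

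For part (i), I would fix a convenient triple of indices (say $1,2,3$, with $-1,-2,-3$ their opposites, which requires $n\geq 3$) and set $\tau:=T_{21}(-\sigma_{23})T_{31}(\sigma_{22})$, exactly paralleling the $GL_n$ proof. One checks that the second row of $\sigma\tau^{-1}$ equals the second row of $\sigma$, hence the second row of $\xi:={}^{\sigma}\tau^{-1}$ is trivial; here one must be slightly careful that the two elementary factors defining $\tau$ involve admissible index pairs (i.e. $2\neq\pm1$, $3\neq\pm1$) and that, because of relation (R1), modifying the second row also has a predictable effect on the $(-2)$-row — but since $\xi$ is built from $\sigma$-conjugates this is automatic. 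Then, setting $\zeta:={}^{\tau^{-1}}[T_{32}(1),[\tau,\sigma]]={}^{\tau^{-1}}[T_{32}(1),\tau\xi]\overset{L.\ref{pre}}{=}[\tau^{-1},T_{32}(1)][T_{32}(1),\xi]$, one computes $[\tau^{-1},T_{32}(1)]=T_{31}(-\sigma_{23})$ using (R4)/(R5) and finds $[T_{32}(1),\xi]$ to be a product of transvections $T_{i2}(x_i)$ with $i\neq 2,-3$ (the second row of $\xi$ being trivial is what kills the terms that would otherwise appear), plus possibly a $T_{3,-2}$-type factor which is harmless. Then $[T_{13}(1),\zeta]=T_{32}(\sigma_{23})$ — or a similar bracket — extracts the desired transvection, showing $T_{32}(\sigma_{23})$ is a product of $8$ elementary orthogonal $\sigma$-conjugates; Lemma \ref{20} then moves it to $T_{kl}(\sigma_{23})$, and conjugating by the monomial matrices $P_{ij}$ of Definition \ref{19} brings an arbitrary off-diagonal-type entry $\sigma_{ij}$ (with $i\neq\pm j$) to position $(2,3)$. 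I expect bookkeeping of which commutators vanish — keeping straight the $h\neq j,-i$, $k\neq i,-j$ side conditions in (R3) and the role of (R5) — to be the main obstacle, but it is purely mechanical.

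For part (iii), I would imitate the proof of Theorem \ref{mthm1}(ii) verbatim: the entry of ${}^{T_{ji}(1)}\sigma$ at position $(j,i)$ is $\sigma_{ii}-\sigma_{jj}+\sigma_{ji}-\sigma_{ij}$ (using $i\neq\pm j$ so that $T_{ji}(1)$ is defined, and computing with relation (R1)), so part (i) applied to ${}^{T_{ji}(1)}\sigma$ yields $T_{kl}(\sigma_{ii}-\sigma_{jj}+\sigma_{ji}-\sigma_{ij})$ as a product of $8$ conjugates; part (i) applied to $\sigma$ twice gives $T_{kl}(\sigma_{ij}-\sigma_{ji})$ as a product of $16$; multiplying (using (R2)) gives $T_{kl}(\sigma_{ii}-\sigma_{jj})$ as a product of $24$. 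For parts (ii) and (iv), which concern the "long root" entries $\sigma_{i,-i}$ and the "long diagonal differences" $\sigma_{ii}-\sigma_{-i,-i}$ that have no analogue in $GL_n$, I would instead use the symmetry (R1) together with part (i): for (ii), note $T_{kl}(\sigma_{i,-i})$ should follow by applying (i) with suitable indices (the entry $\sigma_{i,-i}$ can be brought into a usable position by a $P$-conjugation that respects $\Omega$, and the factor of $2$ versus $8$ accounts for combining $T_{kl}(\sigma_{i,-i})$ with its $(R1)$-partner $T_{-l,-k}(-\sigma_{i,-i})$, doubling the count to $16$); for (iv), applying the trick of (iii) to the pair $(i,-i)$ — i.e. looking at the $(-i,i)$-entry of ${}^{T_{-i,i}(1)}\sigma$, which is $\sigma_{ii}-\sigma_{-i,-i}+\sigma_{-i,i}-\sigma_{i,-i}$ — and then using (ii) (cost $16$ each, twice, plus one application giving $16$) lands at $48$. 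The essential point throughout is that every elementary ${}^{T_{ji}(1)}\sigma$-conjugate is again an elementary $\sigma$-conjugate, so the counts simply add and multiply as in the linear case.
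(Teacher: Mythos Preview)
Your overall strategy of mimicking the $GL_n$ proof is exactly what the paper does, and part (iii) is correct and identical to the paper's argument. However, there are genuine gaps in parts (i), (ii), and (iv).

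\textbf{Part (i).} Your choice $\tau=T_{21}(-\sigma_{23})T_{31}(\sigma_{22})$ does \emph{not} make the second row of $\sigma\tau^{-1}$ equal to that of $\sigma$. The point is that $T_{ij}(x)=e+xe^{ij}-xe^{-j,-i}$ has two nonzero off-diagonal entries, so right multiplication by $\tau^{-1}$ modifies not only column $1$ but also columns $-2$ and $-3$. In row $2$ this produces extra terms $\pm\sigma_{23}\sigma_{2,-1}$ and $\pm\sigma_{22}\sigma_{2,-1}$ in positions $(2,-2)$ and $(2,-3)$, which do not cancel. The paper repairs this by inserting a third factor, taking $\tau=T_{21}(-\sigma_{23})T_{31}(\sigma_{22})T_{2,-3}(\sigma_{2,-1})$; this extra factor is precisely what cancels the $\sigma_{2,-1}$-contributions and restores the second row. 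One then also needs Lemma~\ref{16} to conclude that the $(-2)$-column of $\xi$ is trivial, which is what forces $[T_{32}(1),\xi]$ to be a product of $T_{i2}(x_i)$ with $i\neq\pm2$.

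\textbf{Part (ii).} You cannot move $\sigma_{i,-i}$ to position $(2,3)$ by conjugating with the $P$-matrices: conjugation by $P_{ij}$ permutes $\{i,j,-i,-j\}$ and therefore sends a position of the form $(p,-p)$ only to another position of the form $(q,-q)$. The doubling argument via (R1) is also confused, since $T_{kl}(x)$ and $T_{-l,-k}(-x)$ are the \emph{same} matrix. The paper's route is to observe that the $(j,-i)$-entry of ${}^{T_{ji}(1)}\sigma$ equals $\sigma_{i,-i}+\sigma_{j,-i}$; since $j\neq\pm(-i)$, part (i) applies to this entry and to $\sigma_{j,-i}$ separately, giving $8+8=16$.

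\textbf{Part (iv).} The element $T_{-i,i}(1)$ you propose is not defined: elementary orthogonal transvections $T_{pq}(x)$ require $p\neq\pm q$. The paper instead picks an auxiliary $j$ with $j\neq\pm i$ and writes $\sigma_{ii}-\sigma_{-i,-i}=(\sigma_{ii}-\sigma_{jj})+(\sigma_{jj}-\sigma_{-i,-i})$, then applies (iii) to each summand for $24+24=48$.
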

\begin{proof}
(i) Set $\tau:=T_{21}(-\sigma_{23})T_{31}(\sigma_{22})T_{2,-3}(\sigma_{2,-1})$. One checks easily that the second row of $\sigma\tau^{-1}$ equals the second row of $\sigma$ and hence the second row of 
$\xi:={}^{\sigma}\tau^{-1}$ is trivial. By Lemma \ref{16} the second last column of $\xi$ also is trivial. Set  
\begin{align*}
\zeta:={}^{\tau^{-1}}[T_{32}(1),[\tau,\sigma]]={}^{\tau^{-1}}[T_{32}(1),\tau\xi]\overset{L.\ref{pre}}{=}[\tau^{-1},T_{32}(1)][T_{32}(1),\xi].
\end{align*}
One checks easily that $[\tau^{-1},T_{32}(1)]=T_{31}(-\sigma_{23})$ and
$[T_{32}(1),\xi]=\prod\limits_{i\neq \pm 2 }T_{i2}(x_i)$ for some $x_i\in R~(i\neq \pm 2)$. Hence $\zeta=T_{31}(-\sigma_{23})\prod\limits_{i\neq \pm 2 }T_{i2}(x_i)$. It follows that $[T_{12}(1),\zeta]=T_{32}(\sigma_{23})$. Hence we have shown
\[[T_{12}(1),{}^{T_{21}(\sigma_{23})T_{31}(-\sigma_{22})T_{2,-3}(-\sigma_{2,-1})}[T_{32}(1),[T_{21}(-\sigma_{23})T_{31}(\sigma_{22})T_{2,-3}(\sigma_{2,-1}),\sigma]]]=T_{32}(\sigma_{23}).\]
This implies that $T_{32}(\sigma_{23})$ is a product of $8$ elementary $\sigma$-conjugates. It follows from Lemma \ref{20} that $T_{kl}(\sigma_{23})$ is a product of $8$ elementary $\sigma$-conjugates. Since one can bring $\sigma_{ij}$ to position $(2,3)$ by conjugating monomial matrices in $EO_{2n}(R)$ (see Definition \ref{19}) to $\sigma$, the assertion of (i) follows.\\
\\
(ii) Clearly the entry of $^{T_{ji}(1)}\sigma$ at position $(j,-i)$ equals $\sigma_{i,-i}+\sigma_{j,-i}$. Applying (i) to $^{T_{ji}(1)}\sigma$ we get that $T_{kl}(\sigma_{i,-i}+\sigma_{j,-i})$ is a product of $8$ elementary $\sigma$-conjugates (note that any elementary $^{T_{ji}(1)}\sigma$-conjugate is also an elementary $\sigma$-conjugate). Applying (i) to $\sigma$ we get that $T_{kl}(\sigma_{j,-i})$ is a product of $8$ elementary $\sigma$-conjugates. It follows that $T_{kl}(\sigma_{i,-i})=T_{kl}(\sigma_{i,-i}+\sigma_{j,-i})T_{ji}(-\sigma_{j,-i})$ is a product of $16$ elementary $\sigma$-conjugates.\\
\\
(iii) Clearly the entry of $^{T_{ji}(1)}\sigma$ at position $(j,i)$ equals $\sigma_{ii}-\sigma_{jj}+\sigma_{ji}-\sigma_{ij}$. Applying (i) to $^{T_{ji}(1)}\sigma$ we get that $T_{kl}(\sigma_{ii}-\sigma_{jj}+\sigma_{ji}-\sigma_{ij})$ is a product of $8$ elementary $\sigma$-conjugates (note that any elementary $^{T_{ji}(1)}\sigma$-conjugate is also an elementary $\sigma$-conjugate). Applying (i) to $\sigma$ we get that $T_{kl}(\sigma_{ij}-\sigma_{ji})=T_{kl}(\sigma_{ij})T_{kl}(-\sigma_{ji})$ is a product of $16$ elementary $\sigma$-conjugates. It follows that $T_{kl}(\sigma_{ii}-\sigma_{jj})=T_{kl}(\sigma_{ii}-\sigma_{jj}+\sigma_{ji}-\sigma_{ij})T_{kl}(\sigma_{ij}-\sigma_{ji})$ is a product of $24$ elementary $\sigma$-conjugates.\\
\\
(iv) Follows from (iii) since $T_{kl}(\sigma_{ii}-\sigma_{-i,-i})=T_{kl}(\sigma_{ii}-\sigma_{jj})T_{kl}(\sigma_{jj}-\sigma_{-i,-i})$.\\
\\
\end{proof}

As a corollary we get the Sandwich Classification Theorem for $O_{2n}(R)$. Note that if $\sigma\in O_{2n}(R)$ and $I$ is an ideal of $R$, then $\sigma\in CO_{2n}(R,I)$ if and only if $\sigma_{ij},\sigma_{ii}-\sigma_{jj}\in I$ for any $i\neq j$.
\begin{corollary}
Let $H$ be a subgroup of $O_{2n}(R)$. Then $H$ is normalized by $EO_{2n}(R)$ if and only if 
\begin{equation}
EO_{2n}(R,I)\subseteq H\subseteq CO_{2n}(R,I)
\end{equation}
for some ideal $I$ of $R$.
\end{corollary}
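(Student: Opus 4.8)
The plan is to copy, essentially word for word, the proof of the corresponding corollary for $GL_n(R)$, replacing Theorem \ref{mthm1} by Theorem \ref{mthm2} and Theorem \ref{8} by Theorem \ref{23}. For the ``only if'' direction, assume $H$ is normalized by $EO_{2n}(R)$ and put $I:=\{x\in R\mid T_{12}(x)\in H\}$. First I would check that $I$ is an ideal of $R$: closure under addition is immediate from (R2) of Lemma \ref{18}; for absorption, given $r\in R$ and $x\in I$, Lemma \ref{20}(i) gives $T_{32}(x)={}^{P_{31}}T_{12}(x)\in H$ (here $n\geq 3$ is used, so that an index $3\neq\pm1,\pm2$ exists), and then (R4) yields $T_{12}(rx)=[T_{13}(r),T_{32}(x)]={}^{T_{13}(r)}T_{32}(x)\cdot T_{32}(x)^{-1}\in H$, whence $rx\in I$; since $R$ is commutative this suffices. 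Next, by Lemma \ref{20} every $I$-elementary orthogonal transvection is $EO_{2n}(R)$-conjugate to $T_{12}(x)$ for some $x\in I$, hence lies in $H$; so $EO_{2n}(I)\subseteq H$ and, $H$ being normalized by $EO_{2n}(R)$, also $EO_{2n}(R,I)\subseteq H$.

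For the inclusion $H\subseteq CO_{2n}(R,I)$ I would take $\sigma\in H$ and use the characterization recalled just before the corollary: it is enough to show $\sigma_{ij}\in I$ and $\sigma_{ii}-\sigma_{jj}\in I$ for all $i\neq j$ in $\Omega$. Since $\sigma\in H$ and $H$ is normalized by $EO_{2n}(R)$, every elementary orthogonal $\sigma$-conjugate, and hence every product of such, lies in $H$; thus Theorem \ref{mthm2}(i) (when $i\neq\pm j$) and (ii) (when $j=-i$) give $T_{12}(\sigma_{ij})\in H$, i.e.\ $\sigma_{ij}\in I$, while Theorem \ref{mthm2}(iii) (when $i\neq\pm j$) and (iv) (when $j=-i$) give $T_{12}(\sigma_{ii}-\sigma_{jj})\in H$, i.e.\ $\sigma_{ii}-\sigma_{jj}\in I$. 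Hence $\sigma\in CO_{2n}(R,I)$.

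For the ``if'' direction, assume $EO_{2n}(R,I)\subseteq H\subseteq CO_{2n}(R,I)$ for some ideal $I$. For $\epsilon\in EO_{2n}(R)$ and $h\in H$ one has ${}^{\epsilon}h=[\epsilon,h]\,h$, and $[\epsilon,h]\in[EO_{2n}(R),CO_{2n}(R,I)]=EO_{2n}(R,I)\subseteq H$ by Theorem \ref{23}; hence ${}^{\epsilon}h\in H$, so $H$ is normalized by $EO_{2n}(R)$. I do not expect a genuine obstacle here: the substantive work is already carried out in Theorem \ref{mthm2} and Theorem \ref{23}. The only points needing mild attention are the verification that $I$ is an ideal (where commutativity of $R$ and $n\geq3$ enter) and the bookkeeping that the four cases --- $i\neq\pm j$ versus $j=-i$, and diagonal versus off-diagonal entries --- are precisely matched by the four parts of Theorem \ref{mthm2}.
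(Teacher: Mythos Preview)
Your proof is correct and follows essentially the same approach as the paper's own proof: define $I=\{x\in R\mid T_{12}(x)\in H\}$, deduce $EO_{2n}(R,I)\subseteq H$, use Theorem \ref{mthm2} to get $H\subseteq CO_{2n}(R,I)$, and invoke Theorem \ref{23} for the converse. The only difference is that you spell out why $I$ is an ideal and why $EO_{2n}(I)\subseteq H$, whereas the paper simply says ``clearly''; your case split into $i\neq\pm j$ versus $j=-i$ for the application of Theorem \ref{mthm2} is also a bit more explicit than the paper's one-line reference.
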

\begin{proof}
First suppose that $H$ is normalized by $EO_{2n}(R)$. Let $I$ be the ideal of $R$ defined by $I:=\{x\in R\mid T_{12}(x)\in H\}$. Then clearly $EO_{2n}(R,I)\subseteq H$. It remains to show that $H\subseteq CO_{2n}(R,I)$, i.e. that if $\sigma\in H$, then $\sigma_{ij},\sigma_{ii}-\sigma_{jj}\in I$ for any $i\neq j$. But that follows from the previous theorem. Suppose now that (2) holds for some ideal $I$. Then it follows from the standard commutator formulas in Theorem \ref{23} that $H$ is normalized by $EO_{2n}(R)$
\end{proof}

\section{Bak's unitary group $U_{2n}(R,\Lambda)$}\label{sec7}
In order to classify the subgroups of a general linear group (resp. an even-dimensional orthogonal group) which are normalized by the elementary subgroup (resp. the elementary orthogonal group), the notion of an ideal of a ring is sufficient. Bak's dissertation \cite{bak_2} showed that the notion of an ideal by itself was not sufficient to solve the analogous classification problem for unitary groups, but that a refinement of the notion of an ideal, called a form ideal, was necessary. This led naturally to a more general notion of unitary group, which was defined over a form ring instead of just a ring and generalized all previous concepts. We describe form rings $(R,\Lambda)$ and form ideals $(I, \Gamma)$ first, then the hyperbolic unitary group $U_{2n}(R,\Lambda)$ and its elementary subgroup $EU_{2n}(R,\Lambda)$ over a form ring $(R,\Lambda)$. For a form ideal $(I,\Gamma)$, we recall the definitions of the following subgroups of $U_{2n}(R,\Lambda)$; the preelementary subgroup $EU_{2n}(I, \Gamma)$ of level $(I,\Gamma)$, the elementary subgroup $EU_{2n}((R,\Lambda),(I,\Gamma))$ of level $(I,\Gamma)$, the principal congruence subgroup $U_{2n}((R,\Lambda),(I,\Gamma))$ of level $(I,\Gamma)$, and the full congruence subgroup $CU_{2n}((R,\Lambda),(I,\Gamma))$ of level $(I,\Gamma)$.
\subsection{Form rings and form ideals}
\begin{definition}
Let $R$ be a ring and \begin{align*}\bar{}:R&\rightarrow R\\x&\mapsto \bar{x}\end{align*}
an involution on $R$, i.e. $\overline{x+y}=\bar{x}+\bar{y}$, $\overline{xy}=\bar{y}\bar{x}$ and $\bar{\bar{x}}=x$ for any $x,y\in R$. Let $\lambda\in center(R)$ such that $\lambda\bar{\lambda}=1$ and set $\Lambda_{min}:=\{x-\lambda\bar{x}\mid x\in R\}$ and $\Lambda_{max}:=\{x\in R\mid x=-\lambda\bar{x}\}$. An additive subgroup $\Lambda$ of $R$ such that 
\begin{enumerate}[(i)]
\item $\Lambda_{min}\subseteq\Lambda\subseteq\Lambda_{max}$ and
\item $x\Lambda\bar{x}\subseteq\Lambda~\forall x\in R$ 
\end{enumerate}
is called a {\it form parameter for $R$}. If $\Lambda$ is a form parameter for $R$, the pair $(R,\Lambda)$ is called a {\it form ring}.
\end{definition}
\begin{definition}
Let $(R,\Lambda)$ be a form ring and $I$ an ideal such that $\bar{I}=I$. Set $\Gamma_{max}=I\cap\Lambda$ and $\Gamma_{min}=\{x-\lambda\bar{x}\mid x\in I\}+\langle \{xy\bar{x}\mid x\in I,y\in\Lambda\}\rangle$. An additive subgroup $\Gamma$ of $I$ such that 
\begin{enumerate}[(i)]
\item $\Gamma_{min}\subseteq\Gamma\subseteq\Gamma_{max}$ and  
\item $x\Gamma\bar{x}\subseteq\Gamma~\forall x\in R$ 
\end{enumerate}
is called a {\it relative form parameter of level} $I$. If $\Gamma$ is a relative form parameter of level $I$, then $(I,\Gamma)$ is called a {\it form ideal of $(R,\Lambda)$}.
\end{definition}

Until the end of section 8 let $n\in\mathbb{N}$, $(R,\Lambda)$ a form ring and $(I,\Gamma)$ a form ideal of $(R,\Lambda)$.
\subsection{The hyperbolic unitary group}
\begin{definition}
Set $V:=R^{2n}$. We use the following indexing for the elements of the standard basis of $V$: $(e_1,\dots,e_n,e_{-n},\dots,e_{-1})$. That means that $e_i$ is the column whose $i$-th coordinate is one and all the other coordinates are zero if $1\leq i\leq n$ and the column whose $(2n+1+i)$-th coordinate is one and all the other coordinates are zero if $-n\leq i \leq -1$. Let $p\in M_n(R)$ be the matrix with ones on the skew diagonal and zeros elsewhere. We define the maps 
\begin{align*}
f:V\times V&\rightarrow R&h:V\times V&\rightarrow R&q:V&\rightarrow R/\Lambda\\(v,w)&\mapsto \overline{v}^t\begin{pmatrix} 0 & p \\ 0 & 0 \end{pmatrix}w,&(v,w)&\mapsto \overline{v}^t\begin{pmatrix} 0 & p \\ \lambda p & 0 \end{pmatrix}w,&v&\mapsto f(v,v)+\Lambda
\end{align*} 
where $\bar v$ is obtained from $v$ by applying $~\bar{}~$ to each entry of $v$.
For any $v\in V$, $f(v,v)$ is called the {\it value of $v$} and is denoted by $|v|$. The subgroup $U_{2n}(R,\Lambda):=\{\sigma\in GL_{2n}(R)\mid (h(\sigma u,\sigma v)=h(u,v) \wedge q(\sigma v)=q(v))~\forall u,v\in V\}$ of $GL_{2n}(R)$ is called {\it hyperbolic unitary group}.  
\end{definition}
\begin{example}\label{30}
If $R$ is commutative, $~\bar{}=id$, $\lambda=-1$ and $\Lambda=\Lambda_{max}=R$, then $U_{2n}(R,\Lambda)$ equals the symplectic group $Sp_{2n}(R)$. If $R$ is commutative, $~\bar{}~=id$, $\lambda=1$ and $\Lambda=\Lambda_{min}=\{0\}$, then $U_{2n}(R,\Lambda)$ equals the orthogonal group $O_{2n}(R)$.
\end{example}
\begin{definition}
We define $\Omega_+:=\{1,...,n\}$, $\Omega_-:=\{-n,...,-1\}$, $\Omega:=\Omega_+\cup\Omega_-$ and 
\begin{align*}
\epsilon:\Omega&\rightarrow\{-1,1\}\\
i&\mapsto\epsilon(i):=\begin{cases}
1, &if~i\in\Omega_+,\\
-1, &if~i\in\Omega_-. 
\end{cases}
\end{align*}
Further if $i,j\in \Omega$, we write $i<j$ iff either $i,j\in \Omega_+\land i<j$ or $i,j\in \Omega_-\land i<j$ or $i\in \Omega_+\land j\in \Omega_-$.
\end{definition}
\begin{lemma}\label{32}
Let $\sigma\in GL_{2n}(R)$. Then $\sigma\in U_{2n}(R,\Lambda)$ if and only if
\begin{enumerate}[(i)]
\item $\sigma'_{ij}=\lambda^{(\epsilon(j)-\epsilon(i))/2}\bar{\sigma}_{-j,-i}~ \forall i,j\in\Omega$  and
\item $|\sigma_{*j}|\in\Lambda ~\forall j\in\Omega$. 
\end{enumerate}
\end{lemma}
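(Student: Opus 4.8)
This is the standard matrix description of the hyperbolic unitary group, and the plan is to split the two defining conditions of $U_{2n}(R,\Lambda)$ --- preservation of the sesquilinear form $h$ and preservation of the quadratic form $q$ --- and to match the first with condition (i) and, once (i) is available, the second with condition (ii). Throughout I use the hypothesis $\sigma\in GL_{2n}(R)$.

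I would first deal with $h$. Setting $Q:=\begin{pmatrix}0&p\\ \lambda p&0\end{pmatrix}$ we have $h(u,v)=\bar u^tQv$ for all $u,v\in V$. Using $p^t=p$, $p^2=e$ and $\lambda\bar\lambda=1$ one checks that $Q\in GL_{2n}(R)$ with $Q^{-1}=\begin{pmatrix}0&\bar\lambda p\\ p&0\end{pmatrix}$, and that in the $\Omega$-indexing both $Q$ and $Q^{-1}$ have a single nonzero entry in each row, namely $Q_{i,-i}=\lambda^{(1-\epsilon(i))/2}$ and $Q^{-1}_{i,-i}=\lambda^{-(1+\epsilon(i))/2}$. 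For $\sigma\in GL_{2n}(R)$ the identity $h(\sigma u,\sigma v)=h(u,v)$ for all $u,v$ is equivalent (testing on the $e_i$ and using $\overline{\sigma u}^t=\bar u^t\bar\sigma^t$, which holds because the involution is an anti-automorphism) to $\bar\sigma^tQ\sigma=Q$, hence --- $Q$ and $\sigma$ being invertible --- to $\sigma^{-1}=Q^{-1}\bar\sigma^tQ$. In the $(i,j)$-entry of $Q^{-1}\bar\sigma^tQ$ only the term $Q^{-1}_{i,-i}\,\bar\sigma_{-j,-i}\,Q_{-j,j}$ survives, and since $\lambda$ is central the two powers of $\lambda$ combine to $\lambda^{-(1+\epsilon(i))/2+(1+\epsilon(j))/2}=\lambda^{(\epsilon(j)-\epsilon(i))/2}$. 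Thus preservation of $h$ is exactly condition (i).

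Next, assuming (i) holds, I would show that preservation of $q$ is equivalent to (ii). The ``only if'' direction is immediate: taking $v=e_j$ and using $|e_j|=0$ gives $q(e_j)=0$, so $q(\sigma e_j)=0$, i.e. $|\sigma_{*j}|=|\sigma e_j|\in\Lambda$. For the converse I would use the elementary identities $|vr|=\bar r\,|v|\,r$ and $|v+w|=|v|+|w|+f(v,w)+f(w,v)$ ($v,w\in V$, $r\in R$), together with $h(v,w)=f(v,w)+\lambda\overline{f(w,v)}$ (this is just the identity $Q=M+\lambda M^t$, where $M=\begin{pmatrix}0&p\\0&0\end{pmatrix}$ defines $f$) and $f(w,v)-\lambda\overline{f(w,v)}\in\Lambda_{min}\subseteq\Lambda$; these give in $R/\Lambda$ the relation $q(v+w)=q(v)+q(w)+(h(v,w)+\Lambda)$. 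Writing $v=\sum_{j\in\Omega}e_jv_j$ and inducting on the number of nonzero $v_j$: for one summand, $|\sigma(e_jv_j)|=\bar v_j\,|\sigma_{*j}|\,v_j\in\Lambda$ by (ii) and the form-parameter axiom $x\Lambda\bar x\subseteq\Lambda$ (with $x=\bar v_j$), whence $q(\sigma(e_jv_j))=\Lambda=q(e_jv_j)$; splitting off one basis vector $v=v'+e_jv_j$ and applying the relation just established, the cross term becomes $h(\sigma v',\sigma(e_jv_j))=h(v',e_jv_j)$ by (i), and the other two terms are handled by the induction hypothesis and the one-summand case. Hence $q(\sigma v)=q(v)$ for all $v$. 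Combining this with the previous paragraph, $\sigma\in U_{2n}(R,\Lambda)$ iff $h$ and $q$ are preserved iff (i) and (ii) hold, which is the assertion.

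I expect every computation here to be routine; the place that needs care is the bookkeeping in the $h$-part --- pinning down $Q$ and $Q^{-1}$ in the $\Omega$-indexing so that the exponent comes out as exactly $(\epsilon(j)-\epsilon(i))/2$ --- and, in the $q$-part, checking that all the cross terms really only contribute elements of $\Lambda$ and that the involution and the central element $\lambda$ are placed correctly throughout. One could instead simply refer to \cite{bak-vavilov}, but this direct argument is short.
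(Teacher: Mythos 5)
Your proof is correct. The paper itself does not argue this lemma but simply cites \cite{bak-vavilov}, p.\ 167, so you are supplying the direct computation that the paper defers to a reference. Your split into the two defining conditions is the standard one and both halves check out: in the $h$-part, the identification $Q_{i,-i}=\lambda^{(1-\epsilon(i))/2}$, $Q^{-1}_{i,-i}=\lambda^{-(1+\epsilon(i))/2}$ in the $\Omega$-indexing is right, and the exponent in $(Q^{-1}\bar\sigma^{t}Q)_{ij}=Q^{-1}_{i,-i}\bar\sigma_{-j,-i}Q_{-j,j}$ does collapse to $\lambda^{(\epsilon(j)-\epsilon(i))/2}$ because $\lambda$ is central; in the $q$-part, the identity $h(v,w)=f(v,w)+\lambda\overline{f(w,v)}$ together with $f(w,v)-\lambda\overline{f(w,v)}\in\Lambda_{\min}$ gives $q(v+w)=q(v)+q(w)+(h(v,w)+\Lambda)$, and the one-summand case uses exactly $\bar v_j\Lambda v_j\subseteq\Lambda$ from the form-parameter axiom, so the induction closes. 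Nothing in the argument assumes $R$ commutative, only that $\lambda$ is central and that $\bar{\ }$ is an anti-automorphism, which matches the generality of the statement. In short, you have made explicit the proof the paper outsources; the content is the same, your version is self-contained.
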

\begin{proof}
See \cite{bak-vavilov}, p.167.
\end{proof}
\begin{lemma}\label{33}
Let $\sigma\in U_{2n}(R,\Lambda)$, $x \in R^*$ and $k\in \Omega$. Then the statements below are true.
\begin{enumerate}[(i)]
\item If the $k$-th column of $\sigma$ equals $e_kx$ then the $(-k)$-th row of $\sigma$ equals $\overline{x^{-1}}e^t_{-k}$.
\item If the $k$-th row of $\sigma$ equals $x e^t_k$ then the $(-k)$-th column of $\sigma$ equals $e_{-k}\overline{x^{-1}}$.
\end{enumerate}
\end{lemma}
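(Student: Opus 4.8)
The plan is to deduce both statements directly from the characterization of $U_{2n}(R,\Lambda)$ in Lemma \ref{32}(i), exactly as Lemma \ref{16} was deduced from its orthogonal analogue. The only extra ingredient is the elementary observation that if a single column (resp. row) of $\sigma$ is a scalar multiple of a basis vector, then so is the corresponding column (resp. row) of $\sigma^{-1}$.

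For (i), I would start from $\sigma_{*k}=e_kx$, i.e. $\sigma e_k=e_kx$; since $x\in R^*$ this gives $\sigma^{-1}e_k=e_kx^{-1}$, so that $\sigma'_{ik}=0$ for $i\neq k$ and $\sigma'_{kk}=x^{-1}$. Substituting $i\rightsquigarrow -j$ and $j\rightsquigarrow k$ into the identity $\sigma'_{ij}=\lambda^{(\epsilon(j)-\epsilon(i))/2}\bar{\sigma}_{-j,-i}$ of Lemma \ref{32}(i) yields $\sigma'_{-j,k}=\lambda^{(\epsilon(k)-\epsilon(-j))/2}\bar{\sigma}_{-k,j}$ for every $j\in\Omega$. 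The left-hand side vanishes unless $-j=k$, and for $j=-k$ the exponent of $\lambda$ is $0$ and the left-hand side equals $x^{-1}$; applying the involution gives $\sigma_{-k,j}=0$ for $j\neq -k$ and $\sigma_{-k,-k}=\overline{x^{-1}}$, which is exactly the assertion that the $(-k)$-th row of $\sigma$ equals $\overline{x^{-1}}e^t_{-k}$.

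For (ii), the argument is symmetric. From $e^t_k\sigma=xe^t_k$ one gets $e^t_k\sigma^{-1}=x^{-1}e^t_k$, so $\sigma'_{kj}=0$ for $j\neq k$ and $\sigma'_{kk}=x^{-1}$. Substituting $i\rightsquigarrow k$ and $j\rightsquigarrow -i$ into the same identity of Lemma \ref{32}(i) gives $\sigma'_{k,-i}=\lambda^{(\epsilon(-i)-\epsilon(k))/2}\bar{\sigma}_{i,-k}$ for every $i\in\Omega$; the left-hand side is $0$ unless $i=-k$, and for $i=-k$ the $\lambda$-exponent is $0$ and the value is $x^{-1}$. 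Hence $\sigma_{i,-k}=0$ for $i\neq -k$ and $\sigma_{-k,-k}=\overline{x^{-1}}$, i.e. the $(-k)$-th column of $\sigma$ equals $e_{-k}\overline{x^{-1}}$.

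I do not anticipate any real obstacle: the proof is bookkeeping with the index set $\Omega$ and the half-integer exponents $(\epsilon(j)-\epsilon(i))/2$, the only point to check being that these exponents collapse to $0$ precisely in the two relevant cases $j=-k$ (resp. $i=-k$). The hypothesis $x\in R^*$ enters only to guarantee that $\sigma^{-1}e_k$ (resp. $e^t_k\sigma^{-1}$) is again a scalar multiple of $e_k$.
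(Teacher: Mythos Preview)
Your proof is correct and is exactly the argument the paper intends: it simply says ``Follows from (i) in the previous lemma,'' and you have unpacked precisely that deduction from Lemma~\ref{32}(i). Your bookkeeping with the indices and the observation that the $\lambda$-exponent vanishes when $j=-k$ (resp.\ $i=-k$) is accurate, and the use of $x\in R^*$ to pass to $\sigma^{-1}e_k=e_kx^{-1}$ (resp.\ $e_k^t\sigma^{-1}=x^{-1}e_k^t$) is the only subtlety.
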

\begin{proof}
Follows from (i) in the previous lemma.
\end{proof}
\subsection{Polarity map}
\begin{definition}
The map
\begin{align*}
\widetilde{}: V&\longrightarrow{}^{2n}R\\
v&\longmapsto \begin{pmatrix}\lambda \bar v_{-1}&\dots&\lambda\bar v_{-n}&\bar v_{n}&\dots&\bar v_1\end{pmatrix}
\end{align*}
is called {\it polarity map}. One checks easily that $h(u,v)=\tilde uv$ for any $u,v\in V$ and that $~\widetilde{}~$ is {\it involutary linear}, i.e. $\widetilde{u+v}=\tilde u+\tilde v$ and $\widetilde{vx}=\bar x\tilde v$ for any $u,v\in V$ and $x\in R$.
\end{definition}
\begin{lemma}\label{35}
If $\sigma\in U_{2n}(R,\Lambda)$ and $v\in V$, then $\widetilde{\sigma v}=\tilde v\sigma^{-1}$.
\end{lemma}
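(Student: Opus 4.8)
The plan is to exploit the two facts recorded immediately before the statement: that the polarity map satisfies $h(u,v)=\tilde u v$ for all $u,v\in V$, and that membership of $\sigma$ in $U_{2n}(R,\Lambda)$ entails $h(\sigma u,\sigma w)=h(u,w)$ for all $u,w\in V$. The identity to be proved is an equality of two rows in ${}^{2n}R$, so it suffices to check that they agree after being applied (on the right) to every column $w\in V$.

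First I would fix $v\in V$ and compute, for an arbitrary $w\in V$, using the defining property of the polarity map,
\[\widetilde{\sigma v}\,w = h(\sigma v,w).\]
Next I would rewrite $w$ as $\sigma(\sigma^{-1}w)$ (legitimate since $\sigma$ is invertible) and invoke the $h$-invariance of $\sigma$, followed once more by the defining property of $~\widetilde{}~$:
\[h(\sigma v,w)=h\bigl(\sigma v,\sigma(\sigma^{-1}w)\bigr)=h(v,\sigma^{-1}w)=\tilde v\,(\sigma^{-1}w)=(\tilde v\,\sigma^{-1})\,w,\]
where the last step is just associativity of the products $(\text{row})(\text{matrix})(\text{column})$. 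Thus $\widetilde{\sigma v}\,w=(\tilde v\,\sigma^{-1})\,w$ for every $w\in V$.

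To conclude, I would take $w=e_j$ for each $j\in\Omega$: the left-hand side is the $j$-th entry of the row $\widetilde{\sigma v}$ and the right-hand side is the $j$-th entry of the row $\tilde v\,\sigma^{-1}$, so the two rows coincide, i.e. $\widetilde{\sigma v}=\tilde v\,\sigma^{-1}$. There is no real obstacle here; the only points to keep straight are that $\tilde v\,\sigma^{-1}$ denotes the product of the row $\tilde v$ with the matrix $\sigma^{-1}$, so that evaluation on a column is associative, and that $\sigma^{-1}$ exists and still satisfies the $h$-invariance (immediate, since $U_{2n}(R,\Lambda)$ is a group). One could instead argue purely in terms of $h$, but the row formulation is exactly what the statement requires.
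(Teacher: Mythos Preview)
Your argument is correct: using $h(u,v)=\tilde u v$ together with the $h$-invariance of $\sigma\in U_{2n}(R,\Lambda)$, you obtain $\widetilde{\sigma v}\,w=(\tilde v\,\sigma^{-1})\,w$ for every $w\in V$, and testing on the standard basis gives the desired equality of rows. The paper does not actually supply its own proof of this lemma; it simply cites \cite[Lemma 2.5]{bak-vavilov}. So your contribution here is a short self-contained derivation in place of an external reference, and there is nothing further to compare.
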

\begin{proof}
See \cite[Lemma 2.5]{bak-vavilov}.
\end{proof}
\subsection{The elementary subgroup}
\begin{definition}
If $i,j\in\Omega$ such that $i\neq \pm j$ and $x\in R$, then the matrix
\[T_{ij}(x):=e+x e^{ij}-\lambda^{(\epsilon(j)-\epsilon(i))/2}\overline x e^{-j,-i}\in U_{2n}(R,\Lambda)\]
is called an {\it elementary short root transvection}. If $i\in \Omega$ and $y\in\lambda^{-(\epsilon(i)+1)/2}\Lambda$, then the matrix \[T_{i,-i}(y):=e+y e^{i,-i}\in U_{2n}(R,\Lambda)\] is called an {\it elementary long root transvection}. If $\sigma\in U_{2n}(R,\Lambda)$ is an elementary short root transvection or an elementary long root transvection, it is called an {\it elementary unitary transvection}. The subgroup of $U_{2n}(R,\Lambda)$ generated by all elementary unitary transvections is called {\it elementary unitary group} and is denoted by $EU_{2n}(R,\Lambda)$. An elementary unitary transvection $T_{ij}(x)$ is called {\it$(I,\Gamma)$-elementary} if $i\neq -j~\wedge~x\in I$ or $i=-j \wedge x\in\lambda^{-(\epsilon(i)+1)/2}\Gamma$. The subgroup of 
$U_{2n}(R,\Lambda)$ generated by all $(I,\Gamma)$-elementary transvections is called {\it preelementary subgroup of level $(I,\Gamma)$} and is denoted by $EU_{2n}(I,\Gamma)$. Its normal closure in $EU_{2n}(R,\Lambda)$ is called {\it elementary subgroup of level $(I,\Gamma)$} and is denoted by $EU_{2n}((R,\Lambda),(I,\Gamma))$. 
\end{definition}
\begin{lemma}\label{37}
The relations
\begin{align*}T_{ij}(x)&=T_{-j,-i}(-\lambda^{(\epsilon(j)-\epsilon(i))/2}\overline{x})\tag{R1},\\
T_{ij}(x)T_{ij}(y)&=T_{ij}(x+y)\tag{R2},\\
[T_{ij}(x),T_{hk}(y)]&=e,\tag{R3}\\
[T_{ij}(x),T_{jk}(y)]&=T_{ik}(xy),\tag{R4}\\
[T_{ij}(x),T_{j,-i}(y)]&=T_{i,-i}(xy-\lambda^{-\epsilon(i)}\bar y\bar x)\text{ and}\tag{R5}\\
[T_{i,-i}(x),T_{-i,j}(y)]&=T_{ij}(xy)T_{-j,j}(-\lambda^{(\epsilon(j)-\epsilon(-i))/2)}\bar{y}x y)\tag{R6}\end{align*}
hold where $h\neq j,-i$ and $k\neq i,-j$ in \textnormal{(R3)}, $i,k\neq\pm j$ and $i\neq \pm k$ in \textnormal{(R4)} and $i\neq\pm j$ in \textnormal{(R5)} and \textnormal{(R6)}.
\end{lemma}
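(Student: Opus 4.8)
The plan is to verify each of the six relations by a direct computation with the defining matrices, using throughout the multiplication rule $e^{ab}e^{cd}=\delta_{bc}e^{ad}$ for matrix units, together with the facts that $\lambda\in\mathrm{center}(R)$, $\lambda\bar\lambda=1$ (so $\bar\lambda=\lambda^{-1}$), $\overline{\bar x}=x$, and $\epsilon(-i)=-\epsilon(i)$ for all $i\in\Omega$. First I would record that $T_{ij}(x)^{-1}=T_{ij}(-x)$ and $T_{i,-i}(y)^{-1}=T_{i,-i}(-y)$: both follow at once from (R2) (checked independently below) or directly, since each of the matrix units occurring, $e^{ij}$, $e^{-j,-i}$, $e^{i,-i}$, squares to $0$, and the two units in a short root transvection annihilate each other (as $i\neq\pm j$ forces $j\neq-j$ and $-i\neq i$). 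These inverse formulas convert each commutator $[a,b]$ in the statement into an explicit product $ab\,a^{-1}b^{-1}$ of four matrices of the shape ``$e+$ (sum of at most two matrix units)'', which is what the rest of the computation manipulates.

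Relations (R1) and (R2) are immediate. For (R1), set $\mu:=\lambda^{(\epsilon(j)-\epsilon(i))/2}$; then $T_{ij}(x)=e+xe^{ij}-\mu\bar x\,e^{-j,-i}$, whereas expanding $T_{-j,-i}(-\mu\bar x)$ and using $\epsilon(-i)=-\epsilon(i)$, $\epsilon(-j)=-\epsilon(j)$ (whence $\lambda^{(\epsilon(-i)-\epsilon(-j))/2}=\mu$), $\overline{\bar x}=x$ and $\mu\bar\mu=1$ produces the very same matrix. For (R2) one multiplies out $T_{ij}(x)T_{ij}(y)$ (resp. $T_{i,-i}(x)T_{i,-i}(y)$) and observes that every cross term is a product of two matrix units that vanishes because of the index condition $i\neq\pm j$; what survives is $T_{ij}(x+y)$.

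For (R3) and (R4) I would expand $[T_{ij}(x),T_{hk}(y)]=T_{ij}(x)T_{hk}(y)T_{ij}(-x)T_{hk}(-y)$ and sort the resulting terms by degree in the matrix units. The hypotheses $h\neq j,-i$ and $k\neq i,-j$ in (R3) are exactly what makes every product of a matrix unit from $T_{ij}$ with a matrix unit from $T_{hk}$ equal to zero, so the four factors multiply to $e$. In (R4) the only surviving second-order contribution sits in positions $(i,k)$ and $(-k,-i)$, with coefficients $xy$ and $-\lambda^{(\epsilon(k)-\epsilon(i))/2}\overline{xy}$ respectively --- here one checks $\lambda^{(\epsilon(j)-\epsilon(i))/2}\lambda^{(\epsilon(k)-\epsilon(j))/2}=\lambda^{(\epsilon(k)-\epsilon(i))/2}$ --- so the product equals $T_{ik}(xy)$, and the conditions $i,k\neq\pm j$, $i\neq\pm k$ guarantee that this is a well-formed short root transvection with no further terms.

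The genuinely delicate cases are (R5) and (R6), where a commutator of two short root transvections yields a long root transvection (times another short root transvection in (R6)). After expanding the fourfold product and discarding the many vanishing matrix-unit products, the surviving contribution lies in position $(i,-i)$ for (R5) --- with a coefficient that, using $\lambda$ central and $\overline{\bar x}=x$, rearranges into $xy-\lambda^{-\epsilon(i)}\bar y\bar x$ --- and in positions $(i,j)$ and $(-j,j)$ for (R6), reproducing $T_{ij}(xy)\,T_{-j,j}(-\lambda^{(\epsilon(j)-\epsilon(-i))/2}\bar y x y)$. Besides this $\lambda$-bookkeeping, the one point that is not purely formal is checking that the coefficient of the long root transvection actually lies in the subset a long root parameter must come from: for (R5) one must see $xy-\lambda^{-\epsilon(i)}\overline{xy}\in\lambda^{-(\epsilon(i)+1)/2}\Lambda$, which --- splitting according to the sign of $\epsilon(i)$ and using $\bar\lambda=\lambda^{-1}$ --- reduces to $\Lambda_{min}\subseteq\Lambda$; for (R6) one must see that $-\lambda^{(\epsilon(j)-\epsilon(-i))/2}\bar y x y\in\lambda^{-(\epsilon(-j)+1)/2}\Lambda$, which uses both $\Lambda_{min}\subseteq\Lambda$ and the closure axiom $z\Lambda\bar z\subseteq\Lambda$. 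I expect this last verification to be the only step requiring real care; everything else is the mechanical identity $e^{ab}e^{cd}=\delta_{bc}e^{ad}$.
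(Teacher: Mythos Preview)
Your proposal is correct and follows exactly the approach the paper takes: the paper's proof reads simply ``Straightforward calculation,'' and your outline is precisely such a calculation carried out in detail, including the only nonformal point (that the long-root parameters in (R5) and (R6) lie in the required sets, which you correctly reduce to $\Lambda_{min}\subseteq\Lambda$ and $z\Lambda\bar z\subseteq\Lambda$).
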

\begin{proof}
Straightforward calculation.
\end{proof}
\begin{definition}
Let $v\in V$ be isotropic (i.e. $q(v)=0$) such that $v_{-1}=0$. Then we denote the matrix
\begin{align*}
&\left(\begin{array}{cccc|cccc}1&-\bar v_{-2}&\dots&-\bar v_{-n}&-\bar\lambda\bar v_n&\dots&-\bar\lambda\bar v_2&v_1-\bar\lambda\bar v_1\\&1&&&&&&v_2\\&&\ddots&&&&&\vdots\\&&&1&&&&v_n\\\hline&&&&1&&&v_{-n}\\&&&&&\ddots&&\vdots\\&&&&&&1&v_{-2}\\&&&&&&&1\end{array}\right)\\
=&e+ve^t_{-1}- e_1\bar\lambda\tilde v=T_{1,-1}(\bar\lambda|v|+v_1-\bar\lambda\bar v_1)\prod\limits_{i=2}^{-2}T_{i,-1}(v_i)\in EU_{2n}(R,\Lambda)
\end{align*}
by $T_{*,-1}(v)$. Clearly $T_{*,-1}(v)^{-1}=T_{*,-1}(-v)$ (note that $\tilde vv=0$ since $v$ is isotropic) and
\begin{equation}
^{\sigma}T_{*,-1}(v)=e+\sigma v\sigma'_{-1,*}- \sigma_{*1} \bar\lambda\tilde v\sigma^{-1}\overset{L.\ref{35}}{=}e+\sigma v\widetilde{\sigma_{*1}}- \sigma_{*1}\bar\lambda \widetilde{\sigma v}\label{e3}
\end{equation} 
for any $\sigma\in U_{2n}(R,\Lambda)$.
\end{definition}
\begin{definition}\label{38}
Let $i,j\in\Omega$ such that $i\neq\pm j$. Define $P_{ij}:=e+e^{ij}-e^{ji}+\lambda^{(\epsilon(i)-\epsilon(j))/2}e^{-i,-j}-\lambda^{(\epsilon(j)-\epsilon(i))/2}e^{-j,-i}-e^{ii}-e^{jj}-e^{-i,-i}-e^{-j,-j}=T_{ij}(1)T_{ji}(-1)T_{ij}(1)\in EU_{2n}(R,\Lambda)$. It is easy show that $(P_{ij})^{-1}=P_{ji}$.
\end{definition}
\begin{lemma}\label{39}
Let $x\in R$ and $i,j,k\in\Omega$ such that $i\neq \pm j$ and $k\neq \pm i,\pm j$. Further let $y\in\lambda^{-(\epsilon(i)+1)/2}\Lambda$. Then 
\begin{enumerate}[(i)]
\item $^{P_{ki}}T_{ij}(x)=T_{kj}(x)$,
\item $^{P_{kj}}T_{ij}(x)=T_{ik}(x)$ and
\item $^{P_{-k,-i}}T_{i,-i}(y)=T_{k,-k}(\lambda^{(\epsilon(i)-\epsilon(k))/2}y)$.
\end{enumerate}
\end{lemma}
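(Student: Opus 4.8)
The plan is to deduce all three identities from the relations collected in Lemma~\ref{37}, together with the factorisation $P_{ij}=T_{ij}(1)T_{ji}(-1)T_{ij}(1)$ recorded in Definition~\ref{38}. There is no conceptual obstacle: the statement is purely computational, and the whole point is that the index restrictions $i\neq\pm j$, $k\neq\pm i,\pm j$ force every commutator that arises into the scope of a relation from Lemma~\ref{37}.

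Parts (i) and (ii) are handled exactly as the orthogonal Lemma~\ref{20}. For (i) I would write
\[
{}^{P_{ki}}T_{ij}(x)={}^{T_{ki}(1)}\Bigl({}^{T_{ik}(-1)}\bigl({}^{T_{ki}(1)}T_{ij}(x)\bigr)\Bigr)
\]
and conjugate by the three factors one after another. Under the stated index hypotheses every commutator occurring is governed either by (R3), so that the two transvections commute (for instance $[T_{ik}(-1),T_{ij}(x)]=e$), or by (R4), so that their commutator is again a short root transvection (for instance $[T_{ki}(1),T_{ij}(x)]=T_{kj}(x)$); using (R2) to collect the resulting factors leaves ${}^{P_{ki}}T_{ij}(x)=T_{kj}(x)$, and the mirror-image computation yields (ii). Note that the genuinely unitary relations (R1), (R5), (R6) never enter here, so this part is no more difficult than its orthogonal counterpart; the only thing to verify is the routine claim that each intermediate commutator indeed falls under (R3) or (R4).

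For (iii) the cleanest route is to argue on the module $V=R^{2n}$, using that a $2n\times 2n$ matrix is determined by the images of the standard basis vectors $e_1,\dots,e_n,e_{-n},\dots,e_{-1}$. Reading the matrix of $P_{ij}$ off Definition~\ref{38}, one gets $P_{-k,-i}e_{-i}=e_{-k}$ and $P_{-k,-i}e_i=\lambda^{(\epsilon(i)-\epsilon(k))/2}e_k$ (together with the analogous formulas on $e_{-k}$ and $e_k$), while $P_{-k,-i}$ fixes $e_m$ for every $m\neq\pm i,\pm k$; inverting gives the corresponding statements for $(P_{-k,-i})^{-1}=P_{-i,-k}$. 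Since $T_{i,-i}(y)=e+ye^{i,-i}$ alters only $e_{-i}$, it follows that ${}^{P_{-k,-i}}T_{i,-i}(y)$ alters only $e_{-k}$, and following the image of $e_{-k}$ — first $(P_{-k,-i})^{-1}$ sends it to $e_{-i}$, then $T_{i,-i}(y)$ sends $e_{-i}$ to $e_{-i}+ye_i$, then $P_{-k,-i}$ sends this to $e_{-k}+\lambda^{(\epsilon(i)-\epsilon(k))/2}ye_k$ — one reads off ${}^{P_{-k,-i}}T_{i,-i}(y)=T_{k,-k}(\lambda^{(\epsilon(i)-\epsilon(k))/2}y)$. (Alternatively, expanding $P_{-k,-i}=T_{-k,-i}(1)T_{-i,-k}(-1)T_{-k,-i}(1)$ and conjugating stepwise also works: $T_{-k,-i}(1)$ commutes with $T_{i,-i}(y)$, and (R6) controls the commutator of $T_{-i,-k}(-1)$ with $T_{i,-i}(y)$ — but the accumulation of short root and long root correction terms makes that computation considerably longer.)

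The only point that calls for real attention — and hence the ``hard part'', such as it is — is the bookkeeping of powers of $\lambda$ and of signs in (iii). Besides checking that the coefficient produced is exactly $\lambda^{(\epsilon(i)-\epsilon(k))/2}$, one must confirm that $\lambda^{(\epsilon(i)-\epsilon(k))/2}y$ lies in $\lambda^{-(\epsilon(k)+1)/2}\Lambda$, so that $T_{k,-k}(\lambda^{(\epsilon(i)-\epsilon(k))/2}y)$ really is an elementary long root transvection. This follows from $y\in\lambda^{-(\epsilon(i)+1)/2}\Lambda$ and the identity $(\epsilon(i)-\epsilon(k))/2-(\epsilon(i)+1)/2=-(\epsilon(k)+1)/2$, together with the fact that $\lambda$ is a central unit, so that $\lambda^a\Lambda=\Lambda\lambda^a$ for every $a\in\mathbb{Z}$.
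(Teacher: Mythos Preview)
Your proposal is correct. The paper's own proof is a single line---``Follows from the relations in Lemma~\ref{37}''---and your treatment of (i) and (ii) is precisely a fleshed-out version of that: conjugate by the three factors of $P_{ki}$ (resp.\ $P_{kj}$) and observe that each intermediate commutator is governed by (R3) or (R4).

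For (iii) you take a slightly different route than the one the paper gestures at: instead of unwinding the factorisation $P_{-k,-i}=T_{-k,-i}(1)T_{-i,-k}(-1)T_{-k,-i}(1)$ and invoking the relations (chiefly (R6)), you compute directly on the standard basis of $V$, reading off $P_{-k,-i}e_{-i}=e_{-k}$ and $P_{-k,-i}e_i=\lambda^{(\epsilon(i)-\epsilon(k))/2}e_k$ from Definition~\ref{38}. This is a perfectly legitimate alternative, and in fact cleaner: it avoids tracking the short-root correction terms produced by (R6) and makes the appearance of the power $\lambda^{(\epsilon(i)-\epsilon(k))/2}$ transparent, since it is simply the coefficient sitting at position $(k,i)$ in the matrix $P_{-k,-i}$. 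Your closing check that $\lambda^{(\epsilon(i)-\epsilon(k))/2}y\in\lambda^{-(\epsilon(k)+1)/2}\Lambda$ is also correct and worth including explicitly, as it confirms the right-hand side of (iii) is indeed a well-formed long root transvection.
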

\begin{proof}
Follows from the relations in Lemma \ref{37}.
\end{proof}
\begin{lemma}\label{new}
Let $\sigma\in U_{2n}(R,\Lambda)$ and $i,j\in \Omega$ such that $i\neq \pm j$. Set $\hat\sigma:={}^{P_{ij}}\sigma$. Then
\[|\hat\sigma_{*i}|=
\begin{cases}
|\sigma_{*j}|, \text{ if } \epsilon(i)=\epsilon(j),\\
|\sigma_{*j}|-\bar{\sigma}_{ij}\sigma_{-i,j}+\lambda\overline{\bar{\sigma}_{ij}\sigma_{-i,j}}-\bar{\sigma}_{-j,j}\sigma_{jj}+\lambda\overline{\bar{\sigma}_{-j,j}\sigma_{jj}}, \text{ if } \epsilon(i)=1,\epsilon(j)=-1,\\
|\sigma_{*j}|-\bar{\sigma}_{-i,j}\sigma_{ij}+\lambda\overline{\bar{\sigma}_{-i,j}\sigma_{ij}}-\bar{\sigma}_{jj}\sigma_{-j,j}+\lambda\overline{\bar{\sigma}_{jj}\sigma_{-j,j}}, \text{ if } \epsilon(i)=-1, \epsilon(j)=1.
\end{cases}
\]
\end{lemma}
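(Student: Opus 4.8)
The plan is to compute the $i$-th column of $\hat\sigma$ explicitly and then apply the form $f$ to it. Since $\hat\sigma = P_{ij}\sigma P_{ij}^{-1} = P_{ij}\sigma P_{ji}$ by Definition \ref{38}, we have $\hat\sigma_{*i} = (P_{ij}\sigma)(P_{ji})_{*i}$. Reading off the matrix $P_{ji}$ from Definition \ref{38} one sees that the $i$-th column of $P_{ji}$ is $e_j$, so $\hat\sigma_{*i} = P_{ij}\sigma_{*j}$. Thus it remains to compute $|P_{ij}v|$ where $v := \sigma_{*j}$.

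The first step is then to write the column $w := P_{ij}v$ coordinatewise. From Definition \ref{38} one reads off that $P_{ij}$ sends $e_i\mapsto -e_j$, $e_j\mapsto e_i$, $e_{-i}\mapsto -\lambda^{(\epsilon(j)-\epsilon(i))/2}e_{-j}$, $e_{-j}\mapsto \lambda^{(\epsilon(i)-\epsilon(j))/2}e_{-i}$ and fixes every other $e_k$; hence $w_i = v_j$, $w_j = -v_i$, $w_{-i} = \lambda^{(\epsilon(i)-\epsilon(j))/2}v_{-j}$, $w_{-j} = -\lambda^{(\epsilon(j)-\epsilon(i))/2}v_{-i}$ and $w_k = v_k$ for $k\neq \pm i,\pm j$.

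The second step is to substitute this into the formula $|u| = f(u,u) = \sum_{a\in\Omega_+}\bar u_a u_{-a}$, which is immediate from the definition of $f$ and the shape of $p$. I would split the sum over $a\in\Omega_+$ into those indices with $\{a,-a\}\cap\{\pm i,\pm j\}=\emptyset$, whose contribution is exactly the corresponding part of $|v|$, and the (at most two) remaining indices. Working out the latter in the three cases $\epsilon(i)=\epsilon(j)$, $\epsilon(i)=1,\epsilon(j)=-1$ and $\epsilon(i)=-1,\epsilon(j)=1$ separately, and using that $\lambda$ is central with $\bar\lambda=\lambda^{-1}$ (so that $\overline{\lambda^{-1}}=\lambda$), one finds that these two terms either reassemble the missing part of $|v|$, giving $|\hat\sigma_{*i}|=|v|=|\sigma_{*j}|$ when $\epsilon(i)=\epsilon(j)$, or differ from it by precisely the stated correction terms. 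Finally I would rewrite the result in terms of the matrix entries via $v_a=\sigma_{aj}$ together with $\lambda\bar x\bar y=\lambda\overline{yx}$.

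The only genuine difficulty is the bookkeeping in this last step: one must keep track of which of $i,j,-i,-j$ lie in $\Omega_+$ in each case and of the exact signs and powers of $\lambda$ produced by $P_{ij}$. Beyond the explicit form of $P_{ij}$, centrality of $\lambda$ and the identity $\lambda\bar\lambda=1$, no further input is needed.
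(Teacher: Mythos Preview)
Your proposal is correct and is exactly the ``straightforward computation'' the paper alludes to: identify $\hat\sigma_{*i}=P_{ij}\sigma_{*j}$, write out the four changed coordinates using the explicit form of $P_{ij}$, and compare $\sum_{a\in\Omega_+}\bar w_a w_{-a}$ with $|\sigma_{*j}|$ case by case. No alternative route is taken in the paper.
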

\begin{proof}
Straightforward computation.
\end{proof}
\subsection{Congruence subgroups}
\begin{definition}
The group consisting of all $\sigma\in U_{2n}(R,\Lambda)$ such that $\sigma\equiv e~mod~I$ and $f(\sigma v,\sigma v)\equiv f(v,v)~mod~\Gamma~\forall v\in V$ is called {\it principal congruence subgroup of level $(I,\Gamma)$} and is denoted by $U_{2n}((R,\Lambda),(I,\Gamma))$. By a theorem of Bak \cite{bak_2}, 4.1.4, cf. \cite{bak-vavilov}, 4.4, it is a normal subgroup of $U_{2n}(R,\Lambda)$.
\end{definition}
\begin{lemma}\label{41}
Let $\sigma\in U_{2n}(R,\Lambda)$. Then $\sigma\in U_{2n}((R,\Lambda),(I,\Gamma))$ if and only if
\begin{enumerate}[(i)]
\item $\sigma\equiv e~mod~I$ and
\item $|\sigma_{*j}|\in\Gamma ~\forall j\in\Omega$.
\end{enumerate}
\end{lemma}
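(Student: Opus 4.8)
The plan is to verify the two implications separately. The forward implication is immediate: if $\sigma\in U_{2n}((R,\Lambda),(I,\Gamma))$ then $\sigma\equiv e~mod~I$ by definition, and applying the congruence condition $f(\sigma v,\sigma v)\equiv f(v,v)~mod~\Gamma$ to $v=e_j$ gives $|\sigma_{*j}|=f(\sigma e_j,\sigma e_j)\equiv f(e_j,e_j)~mod~\Gamma$; since the matrix $\begin{pmatrix}0&p\\0&0\end{pmatrix}$ defining $f$ has zero diagonal, $f(e_j,e_j)=0$, so $|\sigma_{*j}|\in\Gamma$ for all $j\in\Omega$. The substance is the converse.

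So suppose $\sigma\equiv e~mod~I$ and $|\sigma_{*j}|\in\Gamma$ for all $j\in\Omega$; I want $f(\sigma v,\sigma v)\equiv f(v,v)~mod~\Gamma$ for all $v\in V$. Writing $v=\sum_{j\in\Omega}e_jv_j$ with $v_j\in R$ and using that $f$ is additive in each argument and satisfies $f(ux,wy)=\bar x f(u,w)y$, one gets
\[f(\sigma v,\sigma v)-f(v,v)=\sum_{i,j\in\Omega}\bar v_i\,d_{ij}\,v_j,\qquad\text{where}\quad d_{ij}:=f(\sigma_{*i},\sigma_{*j})-f(e_i,e_j).\]
First I would record two properties of the $d_{ij}$. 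Since $\sigma\equiv e~mod~I$, all entries of $\sigma_{*i}-e_i$ lie in $I$; expanding $f(\sigma_{*i},\sigma_{*j})$ by additivity in each argument and using that $I$ is an ideal with $\bar I=I$ gives $d_{ij}\in I$ for all $i,j$. Moreover $d_{ii}=|\sigma_{*i}|-f(e_i,e_i)=|\sigma_{*i}|\in\Gamma$ by hypothesis.

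Next I would bring in that $\sigma$ preserves $h$. The identity needed, read off from the explicit matrices of $f$ and $h$ together with $p^t=p=\bar p$, is $h(u,w)=f(u,w)+\lambda\,\overline{f(w,u)}$ for all $u,w\in V$; hence $h(\sigma_{*i},\sigma_{*j})=h(e_i,e_j)$ forces $d_{ij}+\lambda\overline{d_{ji}}=0$. Now fix any total order on $\Omega$ and split:
\[f(\sigma v,\sigma v)-f(v,v)=\sum_{i\in\Omega}\bar v_i\,d_{ii}\,v_i+\sum_{i<j}\bigl(\bar v_i\,d_{ij}\,v_j+\bar v_j\,d_{ji}\,v_i\bigr).\]
Each diagonal term lies in $\Gamma$ because $d_{ii}\in\Gamma$ and $x\Gamma\bar x\subseteq\Gamma$ (apply this with $x=\bar v_i$). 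For an off-diagonal pair set $w_{ij}:=\bar v_j\,d_{ji}\,v_i\in I$ (using that $I$ is an ideal and $d_{ji}\in I$); substituting $d_{ij}=-\lambda\overline{d_{ji}}$ and using that $\lambda$ is central one finds $\bar v_i d_{ij}v_j+\bar v_j d_{ji}v_i=w_{ij}-\lambda\overline{w_{ij}}\in\{x-\lambda\bar x\mid x\in I\}\subseteq\Gamma_{min}\subseteq\Gamma$. Summing, $f(\sigma v,\sigma v)-f(v,v)\in\Gamma$, as required.

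The only computational step is the identity $h(u,w)=f(u,w)+\lambda\overline{f(w,u)}$, which is an elementary unwinding of the two defining matrices and the symmetry of $p$; granting it, the rest is bookkeeping with the sesquilinearity of $f$, the ideal axioms for $I$, and the relative-form-parameter axioms for $\Gamma$. The one point to stay careful about is that $\lambda$ — and therefore $\bar\lambda$ — is central, which is what legitimizes pulling $\lambda$ in and out of the products above; I do not expect any deeper obstacle.
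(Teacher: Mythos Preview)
The paper itself gives no argument here: its entire proof is the citation ``\cite{bak-vavilov}, p.~174.'' Your proposal supplies a complete, self-contained proof, and it is correct. The forward direction is immediate as you say; for the converse, your expansion $f(\sigma v,\sigma v)-f(v,v)=\sum_{i,j}\bar v_i d_{ij}v_j$ together with the identity $h(u,w)=f(u,w)+\lambda\overline{f(w,u)}$ (which you correctly derive from $p^t=p=\bar p$) yields $d_{ij}+\lambda\overline{d_{ji}}=0$ from $h$-invariance, and then the diagonal terms land in $\Gamma$ via $x\Gamma\bar x\subseteq\Gamma$ while each off-diagonal pair is of the form $w-\lambda\bar w$ with $w\in I$, hence in $\Gamma_{min}\subseteq\Gamma$. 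All the ring-theoretic bookkeeping (centrality of $\lambda$, $\bar I=I$, sesquilinearity $f(ux,wy)=\bar xf(u,w)y$) is used exactly where needed. This is in fact the standard argument one finds in Bak--Vavilov, so your proof is not really a different route so much as a faithful reconstruction of the cited one; what it buys over the paper is that the present text becomes self-contained at this point.
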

\begin{proof}
\cite{bak-vavilov}, p.174.
\end{proof}
\begin{definition}
The subgroup
\[\{\sigma\in U_{2n}(R,\Lambda)\mid[\sigma,EU_{2n}(R,\Lambda)]\subseteq U_{2n}((R,\Lambda),(I,\Gamma))\}\]
of $U_{2n}(R,\Lambda)$ is called {\it full congruence subgroup of level $(I,\Gamma)$} and is denoted by $CU_{2n}((R,\Lambda),(I,\Gamma))$. Obviously $U_{2n}((R,\Lambda),(I,\Gamma))\subseteq CU_{2n}((R,\Lambda),(I,\Gamma))$. If $EU_{2n}(R,\Lambda)$ is a normal subgroup of $U_{2n}(R,\Lambda)$ (which for example is true if $n\geq 3$ and $R$ is almost commutative, see \cite[Theorem 1.1]{bak-vavilov}), then $CU_{2n}((R,\Lambda),(I,\Gamma))$ is a normal subgroup of $U_{2n}(R,\Lambda)$.
\end{definition}
\begin{theorem}\label{43}
If $n\geq 3$ and $R$ is almost commutative (i.e. module finite over its center), then the equalities
\begin{align*}[CU_{2n}((R,\Lambda),(I,\Gamma)),EU_{2n}(R,\Lambda)]
=[EU_{2n}((R,\Lambda),(I,\Gamma)),EU_{2n}(R,\Lambda)]
=EU_{2n}((R,\Lambda),(I,\Gamma))\end{align*}
hold.
\end{theorem}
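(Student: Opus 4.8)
The plan is to split the asserted chain of equalities into three parts, labelled (A), (B), (C). Write $E:=EU_{2n}(R,\Lambda)$ and abbreviate the level $(I,\Gamma)$ subgroups by $E_\Gamma:=EU_{2n}((R,\Lambda),(I,\Gamma))$, $U_\Gamma:=U_{2n}((R,\Lambda),(I,\Gamma))$ and $C_\Gamma:=CU_{2n}((R,\Lambda),(I,\Gamma))$. The parts are: (A) $[E_\Gamma,E]=E_\Gamma$; (B) $E_\Gamma\subseteq C_\Gamma$, so that $[E_\Gamma,E]\subseteq[C_\Gamma,E]$; and (C) $[C_\Gamma,E]\subseteq E_\Gamma$. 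Once (A)--(C) are available, the chain $E_\Gamma=[E_\Gamma,E]\subseteq[C_\Gamma,E]\subseteq E_\Gamma$ forces the three subgroups in the statement to coincide. Part (B) is essentially immediate: by Lemma \ref{41} and a short computation of column values, every $(I,\Gamma)$-elementary transvection $T$ lies in $U_\Gamma$ (one has $T\equiv e\bmod I$, all column values of a short root transvection vanish, and the one nontrivial column value of a long root transvection $T_{i,-i}(y)$ lies in $\Gamma$); since $U_\Gamma$ is normal in $U_{2n}(R,\Lambda)$ this gives $E_\Gamma\subseteq U_\Gamma$, whence $[E_\Gamma,E]\subseteq E_\Gamma\subseteq U_\Gamma$, which is exactly the condition defining $C_\Gamma$. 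So $E_\Gamma\subseteq C_\Gamma$. Parts (A) and (B) are commutator and matrix bookkeeping; (C) is where $n\geq3$ and the almost commutativity of $R$ are genuinely used, and it is the part I expect to be the main obstacle.

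For (A), the inclusion $[E_\Gamma,E]\subseteq E_\Gamma$ is automatic, since $E_\Gamma$ is a normal subgroup of $E$ by definition, so it suffices to prove $^{\epsilon}T\in[E_\Gamma,E]$ for every $\epsilon\in E$ and every $(I,\Gamma)$-elementary transvection $T$, as these generate $E_\Gamma$. If $T=T_{ik}(x)$ is a short root transvection with $x\in I$, I would pick, using $n\geq3$, an index $j\neq\pm i,\pm k$ and write $T_{ik}(x)=[T_{ij}(x),T_{jk}(1)]$ by relation (R4) of Lemma \ref{37}; conjugating by $\epsilon$ and using that $T_{ij}(x)\in E_\Gamma$ together with the normality of $E_\Gamma$ in $E$ gives $^{\epsilon}T\in[E_\Gamma,E]$. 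If $T=T_{i,-i}(y)$ is a long root transvection with $y\in\lambda^{-(\epsilon(i)+1)/2}\Gamma$, I would first use relations (R5) and (R6), together with the short root case and the commutation relations (R3), to realise every long root transvection whose parameter lies in $\Gamma_{min}$ as a product of commutators belonging to $[E_\Gamma,E]$; then, conjugating $T_{i,-i}(y)$ by an elementary short root transvection $T_{ki}(1)$ with $k\neq\pm i$ produces --- modulo the short root case --- a long root transvection at position $k$ with essentially the same parameter, and combined with Lemma \ref{39}(iii) this propagates the conclusion to every $(I,\Gamma)$-elementary long root transvection. This is the standard argument that the relative elementary subgroup is generated by elementary commutators, and it uses only $n\geq3$.

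For (C), one first reduces to a single elementary transvection: if $\sigma\in C_\Gamma$ and $\epsilon=\epsilon_1\cdots\epsilon_m\in E$ with each $\epsilon_r$ an elementary unitary transvection, then the identity $[\sigma,\epsilon_1\cdots\epsilon_m]=[\sigma,\epsilon_1]\cdot{}^{\epsilon_1}[\sigma,\epsilon_2\cdots\epsilon_m]$ (which follows from Lemma \ref{pre}), induction, and the normality of $E_\Gamma$ in $E$ reduce the claim to $[\sigma,T]\in E_\Gamma$ for a single elementary transvection $T$. Since $\sigma\in C_\Gamma$ one already knows $[\sigma,T]\in U_\Gamma$; the real content is to upgrade this congruence-theoretic information to genuine membership in the relative elementary subgroup $E_\Gamma$. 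The route I would take is the classical localisation-and-patching one of \cite{bak-vavilov}: roughly, one localises at the maximal ideals $\mathfrak m$ of the centre of $R$ --- using the almost commutativity of $R$ to keep the localised rings $R_{\mathfrak m}$ semilocal and well behaved ---; over $R_{\mathfrak m}$ the structure theory of the hyperbolic unitary group yields a factorisation of the localised element $[\sigma,T]$ into $(I,\Gamma)$-elementary conjugates; and one then patches these local factorisations into a global one by the usual dilation and patching lemmas (which require $n\geq3$), together with the relations of Lemma \ref{37}. This patching step --- bridging the gap between the elementwise congruence condition defining $C_\Gamma$ and genuine membership of commutators in $E_\Gamma$ --- is, as always with sandwich-type theorems, the real difficulty; everything else in the proof is formal commutator calculus.
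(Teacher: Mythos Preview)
Your decomposition into (A), (B), (C) matches the paper's logical structure, and your treatment of (A) and (B) is essentially what the paper does --- it simply cites \cite[Lemma~5.2]{bak-vavilov} for (A). The real divergence is in (C), and here you miss the paper's key idea.

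The paper does \emph{not} redo localisation-and-patching. Instead it argues as follows. Since $n\geq 3$, the relations of Lemma~\ref{37} give $E=[E,E]$, so $[C_\Gamma,E]=[[E,E],C_\Gamma]$. By \cite[Theorem~1.1]{bak-vavilov} the subgroup $E_\Gamma$ is normal in $U_{2n}(R,\Lambda)$, so the three subgroups lemma reduces the inclusion $[[E,E],C_\Gamma]\subseteq E_\Gamma$ to $[[E,C_\Gamma],E]\subseteq E_\Gamma$. But $[E,C_\Gamma]\subseteq U_\Gamma$ by the very definition of $C_\Gamma$, and $[U_\Gamma,E]\subseteq E_\Gamma$ is precisely the Bak--Vavilov standard commutator formula \cite[Theorem~1.1]{bak-vavilov}. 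Thus (C) follows by pure commutator calculus from results already in the literature; no new localisation is needed, and (C) is not the ``real difficulty'' you anticipate but a two-line formality.

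Your proposed route --- localising $[\sigma,T]$ and patching --- amounts to reproving the Bak--Vavilov commutator formula, extended from $\sigma\in U_\Gamma$ to $\sigma\in C_\Gamma$. This extension is not automatic: the dilation arguments in \cite{bak-vavilov} rely on a local factorisation of $\sigma$ itself, not merely on the observation $[\sigma,T]\in U_\Gamma$ (which alone cannot force $[\sigma,T]\in E_\Gamma$, since $U_\Gamma\neq E_\Gamma$ in general). For $\sigma\in C_\Gamma$ one would first need a semilocal structure result of the shape $C_\Gamma=(\text{scalars})\cdot U_\Gamma$, which your sketch does not supply. So as written your (C) has a genuine gap; the three subgroups lemma is the missing idea that closes it cleanly.
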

\begin{proof}
By \cite[Theorem 1.1]{bak-vavilov}), $EU_{2n}((R,\Lambda),(I,\Gamma))$ is normal in $U_{2n}(R,\Lambda)$ and 
\begin{equation}
[U_{2n}((R,\Lambda),(I,\Gamma)),EU_{2n}(R,\Lambda)]\subseteq EU_{2n}((R,\Lambda),(I,\Gamma))
\end{equation}
(note that in \cite{bak-vavilov} the full congruence subgroup is defined a little differently). By \cite[Lemma 5.2]{bak-vavilov}, 
\begin{equation}
[EU_{2n}((R,\Lambda),(I,\Gamma)),EU_{2n}(R,\Lambda)]=EU_{2n}((R,\Lambda),(I,\Gamma)).
\end{equation}
Hence
\begin{align}
&[CU_{2n}((R,\Lambda),(I,\Gamma)),EU_{2n}(R,\Lambda)]\notag\\
=&[EU_{2n}(R,\Lambda),CU_{2n}((R,\Lambda),(I,\Gamma))]\notag\\\
=&[[EU_{2n}(R,\Lambda),EU_{2n}(R,\Lambda)],CU_{2n}((R,\Lambda),(I,\Gamma))]\notag\\\
\subseteq &EU_{2n}((R,\Lambda),(I,\Gamma)
\end{align}
by the definition of $CU_{2n}((R,\Lambda),(I,\Gamma))$, (4) and the three subgroups lemma. (5) and (6) imply the assertion of the theorem. 
\end{proof}
\section{Sandwich classification for $U_{2n}(R,\Lambda)$}\label{sec8}
In this section $n$ denotes a natural number greater or equal to $3$ and $(R,\Lambda)$ a form ring where $R$ is commutative. 
\begin{definition}
Let $\sigma\in U_{2n}(R,\Lambda)$. Then a matrix of the form $^{\epsilon}\sigma^{\pm 1}$ where $\epsilon\in EU_{2n}(R,\Lambda)$ is called an {\it elementary (unitary) $\sigma$-conjugate}.
\end{definition}
\begin{theorem}\label{mthm3}
Let $\sigma\in U_{2n}(R,\Lambda)$, $k\neq\pm l$ and $i\neq \pm j$. Then 
\begin{enumerate}[(i)]
\item $T_{kl}(\sigma_{ij})$ is a product of $160$ elementary unitary $\sigma$-conjugates,
\item $T_{kl}(\sigma_{i,-i})$ is a product of $320$ elementary unitary $\sigma$-conjugates,
\item $T_{kl}(\sigma_{ii}-\sigma_{jj})$ is a product of $480$ elementary unitary $\sigma$-conjugates,
\item $T_{kl}(\sigma_{ii}-\sigma_{-i,-i})$ is a product of $960$ elementary unitary $\sigma$-conjugates and 
\item $T_{k,-k}(\lambda^{-(\epsilon(k)+1)/2}|\sigma_{*j}|)$ is a product of $1600n+4004$ elementary unitary $\sigma$-conjugates.
\end{enumerate}
\end{theorem}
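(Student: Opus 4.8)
The plan is to follow the same strategy used in the proofs of Theorems \ref{mthm1} and \ref{mthm2}, but now paying careful attention to the long root transvections, whose presence both complicates the commutator relations (R5) and (R6) of Lemma \ref{37} and forces the larger constants. For (i) I would fix the target position $(2,3)$ as in the previous proofs. The difficulty is that the ``column-killing'' matrix $\tau$ used to make a row of $\xi={}^\sigma\tau^{-1}$ trivial must now also take care of the $(-j)$-row via the constraint in Lemma \ref{33}, and the commutator of $T_{32}(1)$ with $\xi$ may produce not only short root transvections $T_{i2}(x_i)$ but also a long root transvection $T_{3,-3}(\ast)$ coming from (R5). To extract $T_{32}(\sigma_{23})$ cleanly one first commutes with $T_{12}(1)$ to kill the long root contribution and the unwanted short root factors, exactly as before, and then applies Lemma \ref{39} to move the entry to an arbitrary admissible position $(k,l)$; bringing $\sigma_{ij}$ to position $(2,3)$ is done by conjugating with the $P$-matrices of Definition \ref{38}. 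Items (ii), (iii), (iv) are then formal consequences of (i) by conjugating $\sigma$ by suitable $T_{ji}(1)$ and using relation (R2) to split sums, precisely mirroring the orthogonal case — the constants simply multiply up (roughly a factor $20$ relative to Theorem \ref{mthm2} because each ``elementary $\sigma$-conjugate'' in the general-linear argument becomes a bounded product once the long root corrections are accounted for).

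For part (v), which is genuinely new compared to the general linear and orthogonal settings, the plan is to exploit the identity \eqref{e3} for the conjugate $^{\sigma}T_{*,-1}(v)$ of the ``column transvection'' $T_{*,-1}(v)$. Taking $v=e_{-1}$-type vectors, or more precisely choosing $v$ so that $\sigma v$ is (a multiple of) a standard basis vector, one should be able to produce a matrix of the form $e+w\widetilde{e_{-1}}-\ast$ whose relevant entry records the value $|\sigma_{*1}|$ of a column of $\sigma$. More concretely, since $T_{*,-1}(v)$ is itself by definition a product of $2n-2$ short root transvections and one long root transvection (see the displayed factorization in the definition of $T_{*,-1}(v)$), the matrix $^{\sigma}T_{*,-1}(v)$ is a product of $2n-1$ elementary $\sigma$-conjugates (each $^\sigma T_{ab}(\ast)$ being one such conjugate after the obvious rewriting). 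One then feeds the entries of this matrix into parts (i)–(iv): each entry is of the form $\sigma_{ab}$ (or a short combination thereof), so $T_{k,-k}$ of the appropriate $\Lambda$-multiple of $|\sigma_{*j}|$ becomes a product of a bounded number of the $T_{kl}(\sigma_{ab})$'s, each of which costs $160$ conjugates. Counting the number of summands in $|\sigma_{*j}|=\sum_{a}\overline{\sigma_{aj}}\,\sigma_{-a,j}$-type expression — there are $2n$ terms, each a product of two entries — and tracking the overhead from Lemma \ref{new} (which relates $|\hat\sigma_{*i}|$ to $|\sigma_{*j}|$ plus a bounded error of values of entries) gives the linear-in-$n$ bound $1600n+4004$.

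The main obstacle I anticipate is bookkeeping rather than conceptual: one must verify that the long root transvection $T_{3,-3}(\ast)$ appearing in $[T_{32}(1),\xi]$ via (R5) is killed by the outer commutator with $T_{12}(1)$ — this uses relation (R3) with $h=3,k=-3$, which is admissible since $1\neq 3,-2$ and $2\neq 1,-(-3)$, so it holds — and more importantly that the constant $160$ (rather than the $8$ of the linear case) is actually enough once every occurrence of a long root transvection in the various commutator expansions is rewritten. The cleanest way to control this is to first establish (i) with some explicit constant by writing out the analogue of the displayed nested-commutator formula from the proof of Theorem \ref{mthm1}, then count: the nested commutator $[T_{12}(1),{}^{\tau^{-1}}[T_{32}(1),[\tau,\sigma]]]$ would naively give $8$ conjugates, but each appearance of $\tau$ (a product of up to $4$ transvections now, to handle both the row and the polarity constraint) and each correction term from (R5)/(R6) inflates this; I would absorb the inflation into the factor $20$ and verify $8\cdot 20=160$ suffices. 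For (v) the delicate point is choosing $v$ so that $\sigma v$ has a controlled support — the natural choice is to first conjugate $\sigma$ by a $P$-matrix so that the relevant column becomes $\sigma_{*1}$, then use that $\widetilde{\sigma v}=\tilde v\sigma^{-1}$ (Lemma \ref{35}) to express everything in terms of entries of $\sigma$ and $\sigma^{-1}$, the latter being converted back to entries of $\sigma$ via Lemma \ref{32}(i); the resulting count of entry-products is what produces the precise coefficient $1600=160\cdot 10$ of $n$ and the additive constant $4004$.
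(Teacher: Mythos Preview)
Your plan for (i) has a genuine gap: the direct nested-commutator approach you sketch does \emph{not} produce $T_{32}(\sigma_{23})$ in the unitary setting, no matter how many extra factors you allow. If you imitate the orthogonal argument and choose $\tau$ so that the second row of $\xi={}^\sigma\tau^{-1}$ is trivial, then the commutator manipulations only yield transvections whose arguments are \emph{quadratic} in the entries of $\sigma$, of the form $x\bar\sigma_{23}\sigma_{2k}$ for various $k$ (this is exactly what the paper establishes in its Steps~1--3, obtaining $T_{kl}(x\bar\sigma_{23}\sigma_{2,-1})$, $T_{kl}(x\bar\sigma_{23}\sigma_{21})$, $T_{kl}(x\bar\sigma_{23}\sigma_{22})$ as products of $16$, $16$, $32$ conjugates). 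The obstruction is the involution: with a nontrivial $\bar{\ }$ the relations (R1), (R5), (R6) force the extracted element to be a product $\bar a b$ rather than a single entry. The paper's missing idea (its Step~4) is to apply Step~3 not to $\sigma$ but to a \emph{derived} matrix: set $\tau=[\sigma^{-1},T_{12}(-\bar\sigma_{23})]$ and $\zeta={}^{P_{13}P_{21}}\tau$; then $\zeta_{22}\equiv 1$ and $\zeta_{23}\equiv\bar\sigma_{23}$ modulo ideals already controlled by Steps~1--3, so the quadratic output $\bar\zeta_{23}\zeta_{22}$ of Step~3 is $\equiv\sigma_{23}$. This trick, not bookkeeping overhead, is what produces the constant $160=64+16+16+16+16+32$.

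For (v) your outline has the right tool ($T_{*,-1}(v)$ and identity~(\ref{e3})) but two points are off. First, ${}^\sigma T_{*,-1}(v)$ is \emph{not} a product of elementary $\sigma$-conjugates: an elementary $\sigma$-conjugate is ${}^\epsilon\sigma^{\pm1}$, not ${}^\sigma\epsilon$. One must instead work with the commutator $[T_{*,-1}(v),\sigma]$ (two conjugates) and then repeatedly commute and correct. Second, the argument is again two-layered: choosing $v=\sigma^{-1}v'$ with $v'=(0,\dots,0,\bar\sigma_{11},-\bar\sigma_{21})^t$ only yields $T_{k,-k}$ of $\bar x\bar\sigma_{11}|\sigma_{*1}|\sigma_{11}x$ --- a multiple, not $|\sigma_{*1}|$ itself. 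One then uses $1=\sum_q\sigma'_{1q}\sigma_{q1}$ to write $|\sigma_{*1}|=\sum_{q,r}\bar\sigma'_{1q}\bar\sigma_{q1}|\sigma_{*1}|\sigma_{r1}\sigma'_{1r}$ and handles the diagonal term $q=r=1$ via Step~1 and all other terms via (i)--(ii) combined with (R5)/(R6); this partition-of-unity step is what gives the linear term $1600n$.
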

\begin{proof}(i) In step 1 below we show that $T_{kl}(x\bar\sigma_{23}\sigma_{2,-1})$ where $x\in R$ is a product of $16$ elementary $\sigma$-conjugates. In step 2 we show that $T_{kl}(x\bar\sigma_{23}\sigma_{21})$ where $x\in R$ is a product of $16$ elementary $\sigma$-conjugates. In step 3 we show that $T_{kl}(x\bar\sigma_{23}\sigma_{22})$ is a product of $32$ elementary $\sigma$-conjugates. In step 4 we use steps 1-3 in order to prove (i).\\
\\
\underline{step 1} Set $\tau:=T_{21}(\bar\sigma_{23}\sigma_{23})T_{31}(-\bar\sigma_{23}\sigma_{22})T_{3,-2}(\bar\sigma_{23}\sigma_{2,-1})T_{3,-3}(-\bar\sigma_{22}\sigma_{2,-1}+\bar\lambda\bar\sigma_{2,-1}\sigma_{22})$. One checks easily that the second row of $\sigma\tau^{-1}$ equals the second row of $\sigma$ and hence the second row of 
$\xi:={}^{\sigma}\tau^{-1}$ is trivial. By Lemma \ref{33} the second last column of $\xi$ also is trivial. Set  
\begin{align*}
\zeta:={}^{\tau^{-1}}[T_{-1,2}(1),[\tau,\sigma]]={}^{\tau^{-1}}[T_{-1,2}(1),\tau\xi]\overset{L.\ref{pre}}{=}[\tau^{-1},T_{-1,2}(1)][T_{-1,2}(1),\xi].
\end{align*}
One checks easily that $[\tau^{-1},T_{-1,2}(1)]=T_{31}(\lambda\bar\sigma_{23}\sigma_{2,-1})T_{-1,1}(z)$ for some $z\in \Lambda$ and $[T_{-1,2}(1),\xi]=\prod\limits_{i\neq 2 }T_{i2}(x_i)$ for some $x_i\in R~(i\neq 2)$. Hence $\zeta=T_{31}(\lambda\bar\sigma_{23}\sigma_{2,-1})T_{-1,1}(z)\prod\limits_{i\neq 2 }T_{i2}(x_i)$. It follows that $[T_{-1,3}(-x\bar\lambda),[T_{12}(1),\zeta]]=T_{-1,2}(x\bar\sigma_{23}\sigma_{2,-1})$ for any $x\in R$. Hence we have shown
\[[T_{-1,3}(-x\bar\lambda),[T_{12}(1),{}^{\tau^{-1}}[T_{-1,2}(1),[\tau,\sigma]]]]=T_{-1,2}(x\bar\sigma_{23}\sigma_{2,-1}).\]
This implies that $T_{-1,2}(x\bar\sigma_{23}\sigma_{2,-1})$ is a product of $16$ elementary $\sigma$-conjugates. It follows from Lemma \ref{39} that $T_{kl}(x\bar\sigma_{23}\sigma_{2,-1})$ is a product of $16$ elementary $\sigma$-conjugates.\\
\\
\underline{step 2} Set $\tau:=T_{1,-2}(\bar\sigma_{23}\sigma_{23})T_{3,-2}(-\bar\sigma_{23}\sigma_{21})T_{3,-1}(\bar\lambda\bar\sigma_{23}\sigma_{22})T_{3,-3}(\bar\sigma_{22}\sigma_{21}-\bar\lambda\bar\sigma_{21}\sigma_{22})$. One checks easily that the second row of $\sigma\tau^{-1}$ equals the second row of $\sigma$ and hence the second row of $\xi:={}^{\sigma}\tau^{-1}$ is trivial. By Lemma \ref{33} the second last column of $\xi$ also is trivial. Set  
\begin{align*}
\zeta:={}^{\tau^{-1}}[T_{-2,-1}(1),[\tau,\sigma]]={}^{\tau^{-1}}[T_{-2,-1}(1),\tau\xi]\overset{L.\ref{pre}}{=}[\tau^{-1},T_{-2,-1}(1)][T_{-2,-1}(1),\xi].
\end{align*}
One checks easily that $[\tau^{-1},T_{-2,-1}(1)]=T_{3,-1}(\bar\sigma_{23}\sigma_{21})T_{1,-1}(z)$ for some $z\in \bar\Lambda$ and $[T_{-2,-1}(1),\xi]=\prod\limits_{i\neq 2 }T_{i2}(x_i)$ for some $x_i\in R~(i\neq 2)$. Hence $\zeta=T_{3,-1}(\bar\sigma_{23}\sigma_{21})T_{1,-1}(z)\prod\limits_{i\neq 2 }T_{i2}(x_i)$. It follows that $[T_{-1,3}(-x),$ $[T_{-2,3}(1),\zeta]]=T_{-2,3}(x\bar\sigma_{23}\sigma_{21})$ for any $x\in R$. Hence we have shown
\[[T_{-1,3}(-x),[T_{-2,3}(1),{}^{\tau^{-1}}[T_{-2,-1}(1),[\tau,\sigma]]]]=T_{-2,3}(x\bar\sigma_{23}\sigma_{21}).\]
This implies that $T_{-2,3}(x\bar\sigma_{23}\sigma_{21})$ is a product of $16$ elementary $\sigma$-conjugates. It follows from Lemma \ref{39} that $T_{kl}(x\bar\sigma_{23}\sigma_{21})$ is a product of $16$ elementary $\sigma$-conjugates.\\
\\
\underline{step 3} Set $\tau:=T_{21}(-\bar\sigma_{22}\sigma_{23})T_{31}(\bar\sigma_{22}\sigma_{22})T_{2,-3}(\bar\sigma_{22}\sigma_{2,-1})T_{2,-2}(-\bar\sigma_{23}\sigma_{2,-1}+\bar\lambda\bar\sigma_{2,-1}\sigma_{23})$. One checks easily that the second row of $\sigma\tau^{-1}$ equals the second row of $\sigma$ and hence the second row of 
$\xi:={}^{\sigma}\tau^{-1}$ is trivial. By Lemma \ref{33} the second last column of $\xi$ also is trivial. Set  
\begin{align*}
\zeta:={}^{\tau^{-1}}[T_{32}(1),[\tau,\sigma]]={}^{\tau^{-1}}[T_{32}(1),\tau\xi]\overset{L.\ref{pre}}{=}[\tau^{-1},T_{32}(1)][T_{32}(1),\xi].
\end{align*}
One checks easily that $\psi:=[\tau^{-1},T_{32}(1)]=T_{31}(-\bar\sigma_{22}\sigma_{23})T_{3,-3}(y)T_{3,-2}(z)$ for some $y\in\bar \Lambda$ and $z\in R$ and
$\theta:=[T_{32}(1),\xi]=\prod\limits_{i\neq 2 }T_{i2}(x_i)$ for some $x_i\in R~(i\neq 2)$. 
Set 
\[\chi:={}^{\psi^{-1}}[T_{12}(1),\zeta]={}^{\psi^{-1}}[T_{12}(1),\psi\theta]\overset{L.\ref{pre}}{=}[\psi^{-1},T_{12}(1)][T_{12}(1),\theta].\]
One checks easily that $[\psi^{-1},T_{12}(1)]=T_{32}(\bar\sigma_{22}\sigma_{23})T_{3,-3}(a)T_{3,-1}(b)$ for some $a\in\bar \Lambda$ and $b\in R$ and $[T_{12}(1),\theta]=T_{-2,2}(d)$ for some $d\in \Lambda$. Hence $\chi=T_{32}(\bar\sigma_{22}\sigma_{23})T_{3,-3}(a)T_{3,-1}(b)T_{-2,2}(d)$. It follows that $[T_{-2,3}(\bar x),[T_{2,-1}(1),\chi]]=T_{-2,-1}(-\bar x\bar\sigma_{22}\sigma_{23})\overset{(R1)}{=}T_{12}(x\bar\sigma_{23}\sigma_{22})$ for any $x\in R$. Hence we have shown
\[[T_{-2,3}(\bar x),[T_{2,-1}(1),{}^{\psi^{-1}}[T_{12}(1),{}^{\tau^{-1}}[T_{32}(1),[\tau,\sigma]]]]]=T_{12}(x\bar\sigma_{23}\sigma_{22}).\]
This implies that $T_{12}(x\bar\sigma_{23}\sigma_{22})$ is a product of $32$ elementary $\sigma$-conjugates. It follows from Lemma \ref{39} that $T_{kl}(x\bar\sigma_{23}\sigma_{22})$ is a product of $32$ elementary $\sigma$-conjugates.\\
\\
\underline{step 4} Set $I:=I(\{\overline{\bar\sigma_{23}\sigma_{2,-1}},\bar\sigma_{23}\sigma_{21}\})$, $J:=I(\{\overline{\bar\sigma_{23}\sigma_{2,-1}},\bar\sigma_{23}\sigma_{21},\bar\sigma_{23}\sigma_{22}\})$ and
\[\tau:=[\sigma^{-1},T_{12}(-\bar\sigma_{23})]=(e-\sigma'_{*1}\bar\sigma_{23}\sigma_{2*}+\sigma_{*,-2}'\sigma_{23}\sigma_{-1,*})T_{12}(\bar\sigma_{23}).\]
One checks easily that $\tau_{11}\equiv 1 ~mod~I$ and $\tau_{12}\equiv \bar\sigma_{23}~mod~J$. Set 
$\zeta:={}^{P_{13}P_{21}}\tau$. Then $\zeta_{22}=\tau_{11}$ and $\zeta_{23}=\tau_{12}$ and hence $\bar\zeta_{23}\zeta_{22}\equiv \sigma_{23}~mod~I+\bar J$. Applying step 3 above to $\zeta$, we get that $T_{kl}(\bar\zeta_{23}\zeta_{22})$ is a product of $32$ elementary $\zeta$-conjugates. Since any elementary $\zeta$-conjugate is a product of $2$ elementary $\sigma$-conjugates, it follows that $T_{kl}(\bar\zeta_{23}\zeta_{22})$ is a product of $64$ elementary $\sigma$-conjugates. Thus, by steps 1-3, $T_{kl}(\sigma_{23})$ is a product of $64+16+16+16+16+32=160$ elementary $\sigma$-conjugates. Since one can bring $\sigma_{ij}$ to position $(3,2)$ by conjugating monomial matrices in $EU_{2n}(R,\Lambda)$, the assertion of (i) follows.\\
\\
(ii)-(iv) See the proof of Theorem \ref{mthm2}.\\
\\
(v) Set m:=160. In step 1 we show that $T_{k,-k}(\lambda^{-(\epsilon(k)+1)/2}\bar x\bar\sigma_{11}|\sigma_{*1}|\sigma_{11}x)$ where $x\in R$ is a product of $(2n+17)m+4$ elementary $\sigma$-conjugates. In step 2 we use step 1 in order to prove (v).\\
\\
\underline{step 1} Set $v':=\begin{pmatrix}0&\dots&0&\sigma'_{-1,-1}&-\sigma'_{-1,-2}\end{pmatrix}^t=\begin{pmatrix}0&\dots&0&\bar\sigma_{11}&-\bar\sigma_{21}\end{pmatrix}^t\in V$ and $v:=\sigma^{-1}v'\in V$. Then clearly $v_{-1}=0$. Further $q(v)=q(\sigma^{-1}v')=q(v')=0$ and hence $v$ is isotropic. Set
\[
\xi:={}^{\sigma}T_{*,-1}(-v)\overset{(\ref{e3})}{=}e-\sigma v\widetilde{\sigma_{*1}}+ \sigma_{*1} \bar\lambda\widetilde{\sigma v}=e-v'\widetilde{\sigma_{*1}}+ \sigma_{*1}\bar\lambda \widetilde{v'}.
\]
Then
\begin{align*}
\xi=
\arraycolsep=8pt\def\arraystretch{1.5}\left(\begin{array}{cccccc|cccccc}
*&\sigma_{11}\sigma_{11}&&&&&&&&&&\\
*&1+\sigma_{21}\sigma_{11}&&&&&&&&&&\\
*&\sigma_{31}\sigma_{11}&1&&&&&&&&&\\
*&\sigma_{41}\sigma_{11}&&1&&&&&&&\\
\vdots&\vdots&&&\ddots&&&&&&&\\
*&\sigma_{n1}\sigma_{11}&&&&1&&&&&&\\
\hline*&\sigma_{-n,1}\sigma_{11}&&&&&1&&&&\\
\vdots&\vdots&&&&&&\ddots&&&&\\
*&\sigma_{-4,1}\sigma_{11}&&&&&&&1&&&\\
-\sigma_{-3,1}\sigma_{21}&\sigma_{-3,1}\sigma_{11}&&&&&&&&1&&\\
*&\alpha&*&*&\dots&*&*&\dots&*&-\bar\sigma_{11}\bar\sigma_{31}&*&*\\
*&\beta&*&*&\dots&*&*&\dots&*&\bar\sigma_{21}\bar\sigma_{31}&*&*
\end{array}\right)
\end{align*}
where $\alpha=\sigma_{-2,1}\sigma_{11}-\lambda\bar\sigma_{11}\bar\sigma_{-2,1}$ and $\beta=\sigma_{-1,1}\sigma_{11}+\lambda\bar\sigma_{21}\bar\sigma_{-2,1}$. Set \[\tau:=T_{-3,1}(\sigma_{-3,1}\sigma_{21})T_{-3,2}(-\sigma_{-3,1}\sigma_{11}).\] 
It follows from (i) that $\tau$ is a product of $2m$ elementary $\sigma$-conjugates. Clearly
\begin{align*}
\xi\tau=
\arraycolsep=8pt\def\arraystretch{1.5}\left(\begin{array}{cccccc|cccccc}
*&\sigma_{11}\sigma_{11}&&&&&&&&&&\\
*&1+\sigma_{21}\sigma_{11}&&&&&&&&&&\\
*&\sigma_{31}\sigma_{11}&1&&&&&&&&&\\
*&\sigma_{41}\sigma_{11}&&1&&&&&&&\\
\vdots&\vdots&&&\ddots&&&&&&&\\
*&\sigma_{n1}\sigma_{11}&&&&1&&&&&&\\
\hline*&\sigma_{-n,1}\sigma_{11}&&&&&1&&&&\\
\vdots&\vdots&&&&&&\ddots&&&&\\
*&\sigma_{-4,1}\sigma_{11}&&&&&&&1&&&\\
0&0&&&&&&&&1&&\\
*&\gamma&0&*&\dots&*&*&\dots&*&*&*&*\\
*&\delta&0&*&\dots&*&*&\dots&*&*&*&*
\end{array}\right)
\end{align*}
where $\gamma=\alpha+\bar\sigma_{11}\bar\sigma_{31}\sigma_{-3,1}\sigma_{11}$ and $\delta=\beta-\bar\sigma_{21}\bar\sigma_{31}\sigma_{-3,1}\sigma_{11}$. Let $x\in R$ and set  
\begin{align*}
\zeta:={}&^{T_{*,-1}(-v)}[T_{2,-3}(-x),[T_{*,-1}(v),\sigma]\tau]\\
={}&^{T_{*,-1}(-v)}[T_{2,-3}(-x),T_{*,-1}(v)\xi\tau]
\\\overset{L.\ref{pre}}{=}&[T_{*,-1}(-v),T_{2,-3}(-x)][T_{2,-3}(-x),\xi\tau].
\end{align*}
Clearly $\zeta$ is a product of $4m+4$ elementary $\sigma$-conjugates. One checks easily that 
\begin{align*}
&[T_{*,-1}(-v),T_{2,-3}(-x)]\\
=&T_{1,-1}(\bar\lambda(
-\bar xv_{-2}\bar v_{-3}+\lambda\overline{\bar xv_{-2}\bar v_{-3}}))T_{1,-2}(\bar\lambda\bar x \bar v_{-3})T_{1,-3}(-x\bar v_{-2})\\
=&T_{1,-1}(\bar\lambda(a+\lambda\bar a))T_{1,-2}(b+c)T_{1,-3}(-x(\sigma_{11}\sigma_{22}-\sigma_{12}\sigma_{21}))
\end{align*}
for some $a,b\in I(\sigma_{21}), c\in I(\sigma_{23})$. Further
\[[T_{2,-3}(-x),\xi\tau]=(\prod\limits_{\substack{p=1,\\p\neq 3}}^{-4}T_{p,-3}(x\sigma_{p1}\sigma_{11}))T_{-2,-3}(x\gamma)T_{-1,-3}(x\delta)T_{3,-3}(y)\]
where $y=\bar\lambda(\bar x\bar\sigma_{11}|\sigma_{*1}|\sigma_{11}x+d-\lambda\bar d+e-\lambda\bar e)$ for some $d\in I(\sigma_{31}),e\in I(\sigma_{-2,1})$. Hence
\begin{align*}
\zeta=&T_{1,-1}(\bar\lambda(a+\lambda\bar a))T_{1,-2}(b+c)T_{1,-3}(-x(\sigma_{11}(\sigma_{22}-\sigma_{11})-\sigma_{12}\sigma_{21}))\cdot\\
&\cdot (\prod\limits_{\substack{p=2,\\p\neq 3}}^{-4}T_{p,-3}(x\sigma_{p1}\sigma_{11}))T_{-2,-3}(x\gamma)T_{-1,-3}(x\delta)T_{3,-3}(y).
\end{align*}
It follows from (i), (ii) and (iii) and relation (R5) in Lemma \ref{37} that $T_{3,-3}(y)$ is a product of $4m+4+2m+2m+4m+(2n-5)m+3m+3m=(2n+13)m+4$ elementary $\sigma$-conjugates. By (i) and relation (R5) in Lemma \ref{37}, $T_{3,-3}(-\bar\lambda(d-\lambda\overline{d}))$ and $T_{3,-3}(-\bar\lambda(e-\lambda\overline{e})))$ each are a product of $2m$ elementary $\sigma$-conjugates. Hence $T_{3,-3}(\bar\lambda(\bar x\bar\sigma_{11}|\sigma_{*1}|\sigma_{11}x))=T_{3,-3}(y)T_{3,-3}(-\bar\lambda(d-\lambda\overline{d}))T_{3,-3}(-\bar\lambda(e-\lambda\overline{e})))$ is a product of $(2n+17)m+4$ elementary $\sigma$-conjugates. It follows from Lemma \ref{39} that $T_{k,-k}(\lambda^{-(\epsilon(k)+1)/2}\bar x\bar\sigma_{11}|\sigma_{*1}|\sigma_{11}x)$ is a product of $(2n+17)m+4$ elementary $\sigma$-conjugates.\\
\\
\underline{step 2} Clearly
\begin{align*}
&T_{k,-k}(\lambda^{-(\epsilon(k)+1)/2}|\sigma_{*1}|)\\
=&T_{k,-k}(\lambda^{-(\epsilon(k)+1)/2}\overline{\sum\limits_{q\in \Omega}\sigma'_{1q}\sigma_{q1}}|\sigma_{*1}|\sum\limits_{r\in \Omega}\sigma'_{1r}\sigma_{r1})\\
=&T_{k,-k}(\lambda^{-(\epsilon(k)+1)/2}\sum\limits_{q,r\in \Omega}\bar\sigma'_{1q}\bar\sigma_{q1}|\sigma_{*1}|\sigma_{r1}\sigma'_{1r})\\
=&T_{k,-k}(\lambda^{-(\epsilon(k)+1)/2}\sum\limits_{q}\bar\sigma'_{1q}\bar\sigma_{q1}|\sigma_{*1}|\sigma_{q1}\sigma'_{1q})T_{k,-k}(\lambda^{-(\epsilon(k)+1)/2}\sum\limits_{q<r}\bar\sigma'_{1q}\bar\sigma_{q1}|\sigma_{*1}|\sigma_{r1}\sigma'_{1r}+\bar\sigma'_{1r}\bar\sigma_{r1}|\sigma_{*1}|\sigma_{q1}\sigma'_{1q})\\
=&\underbrace{T_{k,-k}(\lambda^{-(\epsilon(k)+1)/2}\sum\limits_{q}\bar\sigma'_{1q}\bar\sigma_{q1}|\sigma_{*1}|\sigma_{q1}\sigma'_{1q})}_{A:=}\underbrace{T_{k,-k}(\lambda^{-(\epsilon(k)+1)/2}\sum\limits_{q<r}\bar\sigma'_{1q}\bar\sigma_{q1}|\sigma_{*1}|\sigma_{r1}\sigma'_{1r}-\lambda\overline{\bar\sigma'_{1q}\bar\sigma_{q1}|\sigma_{*1}|\sigma_{r1}\sigma'_{1r}})}_{B:=}
\end{align*}
since $|\sigma_{*1}|\in \Lambda\subseteq \Lambda_{max}$. By step 1, $T_{k,-k}(\lambda^{-(\epsilon(k)+1)/2}\bar\sigma'_{11}\bar\sigma_{11}|\sigma_{*1}|\sigma_{11}\sigma'_{11})$ is a product of $(2n+17)m+4$ elementary $\sigma$-conjugates. By (i), (ii) and relation (R6) in Lemma \ref{37}, $T_{k,-k}(\lambda^{-(\epsilon(k)+1)/2}\bar\sigma'_{1q}\bar\sigma_{q1}|\sigma_{*1}|\sigma_{q1}\sigma'_{1q})$ is a product of $3m$ elementary $\sigma$-conjugates if $q\neq \pm 1$ resp. a product of $6m$ elementary $\sigma$-conjugates if $q=-1$. Hence $A$ is a product of $(2n+17)m+4+(2n-2)\cdot 3m+6m=(8n+17)m+4$ elementary $\sigma$-conjugates. On the other hand $B=T_{k,-k}(\lambda^{-(\epsilon(k)+1)/2}(x-\lambda\bar x))$ where $x\in I(|\sigma_{*1}|)$. Since $|\sigma_{*1}|=\sum\limits_{i\in \Omega_+}\bar\sigma_{i1}\sigma_{-i,1}$, it follows from (i), (ii) and relation (R5) in Lemma \ref{37} that $B$ is a product of $4m+(n-1)\cdot 2m=(2n+2)m$ elementary $\sigma$-conjugates. Hence $T_{k,-k}(\lambda^{-(\epsilon(k)+1)/2}|\sigma_{*1}|)$ is a product of $(10n+19)m+4=1600n+3044$ elementary $\sigma$-conjugates. The assertion of (v) follows now from Lemma \ref{new}.
\end{proof}

As a corollary we get the Sandwich Classification Theorem for $U_{2n}(R,\Lambda)$.
\begin{corollary}
Let $H$ be a subgroup of $U_{2n}(R,\Lambda)$. Then $H$ is normalized by $EU_{2n}(R,\Lambda)$ if and only if 
\begin{equation}
EU_{2n}((R,\Lambda),(I,\Gamma))\subseteq H\subseteq CU_{2n}((R,\Lambda),(I,\Gamma))
\end{equation}
for some form ideal $(I,\Gamma)$ of $(R,\Lambda)$.
\end{corollary}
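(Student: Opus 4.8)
The plan is to imitate the proofs of the two preceding corollaries, with Theorem \ref{mthm3} playing the role of Theorems \ref{mthm1} and \ref{mthm2} and Theorem \ref{43} supplying the standard commutator formulas. The ``if'' direction is immediate: assuming the sandwich holds, Theorem \ref{43} gives
\[[H,EU_{2n}(R,\Lambda)]\subseteq[CU_{2n}((R,\Lambda),(I,\Gamma)),EU_{2n}(R,\Lambda)]=EU_{2n}((R,\Lambda),(I,\Gamma))\subseteq H,\]
and hence ${}^{\epsilon}h=[\epsilon,h]h\in H$ for all $h\in H$ and $\epsilon\in EU_{2n}(R,\Lambda)$ (note $[\epsilon,h]=[h,\epsilon]^{-1}\in[H,EU_{2n}(R,\Lambda)]\subseteq H$), so $H$ is normalized by $EU_{2n}(R,\Lambda)$.

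For the ``only if'' direction, assume $H$ is normalized by $EU_{2n}(R,\Lambda)$ and put $I:=\{x\in R\mid T_{12}(x)\in H\}$ and $\Gamma:=\{\xi\in\Lambda\mid T_{-1,1}(\xi)\in H\}$; note $\lambda^{-(\epsilon(-1)+1)/2}=1$, so $T_{-1,1}(\xi)$ is an elementary long root transvection precisely when $\xi\in\Lambda$. The first step is to check that $(I,\Gamma)$ is a form ideal. Using the relations of Lemma \ref{37}: $I$ is closed under addition by (R2); $I$ is a two-sided ideal because $[T_{13}(r),T_{32}(x)]=T_{12}(rx)$ and $[T_{13}(x),T_{32}(r)]=T_{12}(xr)$ by (R4) (here $n\geq 3$ furnishes a third index, and Lemma \ref{39}(i),(ii) moves a short root transvection with parameter in $I$ into $H$); and $\bar I=I$ by (R1) and Lemma \ref{39}. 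For $\Gamma$: closure under addition is (R2), and $\Gamma\subseteq\Lambda$ holds by definition. The inclusion $\Gamma\subseteq I$ follows by applying Theorem \ref{mthm3}(ii) to $\sigma=T_{-1,1}(\xi)\in H$, whose $(-1,1)$-entry is $\xi$, which yields $T_{12}(\xi)\in H$. The inclusion $\Gamma_{min}\subseteq\Gamma$ amounts to two facts: $x-\lambda\bar x\in\Gamma$ for $x\in I$, extracted from $[T_{-1,2}(x),T_{21}(1)]=T_{-1,1}(x-\lambda\bar x)$ via (R5); and $xy\bar x\in\Gamma$ for $x\in I$, $y\in\Lambda$, extracted from $[T_{2,-2}(\bar\lambda y),T_{-2,1}(\bar x)]=T_{21}(\bar\lambda y\bar x)T_{-1,1}(-xy\bar x)$ via (R6) after cancelling $T_{21}(\bar\lambda y\bar x)\in H$ (which holds because $\bar x\in\bar I=I$). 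Finally $x\Gamma\bar x\subseteq\Gamma$ is the same (R6) identity with $y$ replaced by an element $\xi\in\Gamma$: now $T_{2,-2}(\bar\lambda\xi)\in H$ by Lemma \ref{39}(iii), and $T_{21}(\bar\lambda\xi\bar x)\in H$ because $\xi\in\Gamma\subseteq I$. In all of these computations one uses that $[\,\cdot\,,h]\in H$ whenever $h\in H$, since $H$ is normalized by $EU_{2n}(R,\Lambda)$, and the form parameter axioms guarantee the extracted element lies in $\Lambda$.

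Next come the two inclusions. For $EU_{2n}((R,\Lambda),(I,\Gamma))\subseteq H$: every $(I,\Gamma)$-elementary transvection is either $T_{ij}(x)$ with $i\neq -j$, $x\in I$ --- which lies in $H$ since $T_{12}(x)\in H$ and Lemma \ref{39}(i),(ii) carries it to position $(i,j)$ --- or $T_{i,-i}(\eta)$ with $\eta\in\lambda^{-(\epsilon(i)+1)/2}\Gamma$ --- which lies in $H$ since $T_{-1,1}(\xi)\in H$ for the corresponding $\xi\in\Gamma$ and Lemma \ref{39}(iii) carries it to position $(i,-i)$ --- so the normal closure $EU_{2n}((R,\Lambda),(I,\Gamma))$ of $EU_{2n}(I,\Gamma)$ in $EU_{2n}(R,\Lambda)$ is contained in $H$. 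For $H\subseteq CU_{2n}((R,\Lambda),(I,\Gamma))$: let $\sigma\in H$. By Theorem \ref{mthm3}(i)--(iv), $\sigma_{ij}\in I$ for all $i\neq j$ and $\sigma_{ii}-\sigma_{jj}\in I$ for all $i\neq j$, so the image of $\sigma$ in $GL_{2n}(R/I)$ is scalar, hence central; therefore $[\sigma,\epsilon]\equiv e\pmod I$ for every $\epsilon\in EU_{2n}(R,\Lambda)$. On the other hand $[\sigma,\epsilon]=\sigma\,{}^{\epsilon}(\sigma^{-1})$ is a product of two elementary $\sigma$-conjugates and hence lies in $H$, so Theorem \ref{mthm3}(v), applied to $[\sigma,\epsilon]$ with $k=-1$, gives $T_{-1,1}(|[\sigma,\epsilon]_{*j}|)\in H$, i.e.\ $|[\sigma,\epsilon]_{*j}|\in\Gamma$ for all $j$ (recall $|[\sigma,\epsilon]_{*j}|\in\Lambda$ by Lemma \ref{32}). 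By Lemma \ref{41} this means $[\sigma,\epsilon]\in U_{2n}((R,\Lambda),(I,\Gamma))$ for every $\epsilon\in EU_{2n}(R,\Lambda)$; hence $[\sigma,EU_{2n}(R,\Lambda)]\subseteq U_{2n}((R,\Lambda),(I,\Gamma))$, that is, $\sigma\in CU_{2n}((R,\Lambda),(I,\Gamma))$.

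The one genuinely fiddly point is the verification that $(I,\Gamma)$ is a form ideal --- specifically obtaining $\Gamma_{min}\subseteq\Gamma$ and $x\Gamma\bar x\subseteq\Gamma$ --- because this requires picking the correct commutators among (R5) and (R6) and tracking the powers of $\lambda$ and the involution precisely; everything else is a formal consequence of Theorem \ref{mthm3}, Theorem \ref{43}, Lemma \ref{39} and the normality of $H$. In contrast with the $GL_n(R)$ and $O_{2n}(R)$ cases, the essential new ingredient here is Theorem \ref{mthm3}(v), which controls the form parameter component $|\sigma_{*j}|$ of an element $\sigma\in H$.
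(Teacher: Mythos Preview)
Your proof is correct and follows the same overall strategy as the paper: define $I$ and $\Gamma$ via membership of $T_{12}(x)$ and $T_{-1,1}(\xi)$ in $H$, invoke Theorem~\ref{mthm3} to get $H\subseteq CU_{2n}((R,\Lambda),(I,\Gamma))$ (applying parts (i)--(iv) to $\sigma$ for the congruence modulo $I$, and part (v) to $[\sigma,\epsilon]\in H$ for the $\Gamma$-condition), and use Theorem~\ref{43} for the converse. The paper simply asserts that $(I,\Gamma)$ is a form ideal; you supply the verification via the relations (R4)--(R6) and Lemma~\ref{39}, which is a genuine service to the reader since this is exactly the point where the unitary case diverges from $GL_n$ and $O_{2n}$.
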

\begin{proof}
First suppose that $H$ is is normalized by $EU_{2n}(R,\Lambda)$. Let $(I,\Gamma)$ be the form ideal of $(R,\Lambda)$ defined by $I:=\{x\in R\mid T_{12}(x)\in H\}$ and $\Gamma:=\{y\in \Lambda\mid T_{-1,1}(y)\in H\}$. Then clearly $EU_{2n}((R,\Lambda),(I,\Gamma))\subseteq H$. It remains to show that $H\subseteq CU_{2n}((R,\Lambda),(I,\Gamma))$, i.e. that if $\sigma\in H$ and $\epsilon\in EU_{2n}(R,\Lambda)$, then $[\sigma,\epsilon]\in U_{2n}((R,\Lambda),(I,\Gamma))$. By Lemma \ref{41} it suffices to show that if $\sigma\in H$ and $\epsilon\in EU_{2n}(R,\Lambda)$, then $[\sigma,\epsilon]\equiv e~mod~I$ and $|[\sigma,\epsilon]_{*j}|\in \Gamma$ for any $j\in\Omega$. But that follows from the previous theorem (applying the theorem to $\sigma$ we get that $\sigma\equiv diag(x,\dots,x)~mod~I$ for some $x\in R$ and hence $[\sigma,\epsilon]\equiv e~mod~I$; applying it to $[\sigma,\epsilon]$ we get that $|[\sigma,\epsilon]_{*j}|\in \Gamma$ for any $j\in\Omega$). Suppose now that (7) holds for some form ideal $(I,\Gamma)$. Then it follows from the standard commutator formulas in Theorem \ref{43} that $H$ is normalized by $EU_{2n}(R,\Lambda)$.
\end{proof}

\end{document}